\documentclass[10pt,oneside, reqno]{amsart}

\usepackage[colorlinks,linkcolor=blue,citecolor=blue]{hyperref}
\usepackage{graphics}
\usepackage{enumitem}
\usepackage{marginnote}	
\usepackage{latexsym, amssymb, amsmath, amsthm}
\usepackage{mathrsfs}
\usepackage{tikz}
\usepackage{tikz-cd}
\usepackage[only,llbracket,rrbracket]{stmaryrd}
\usetikzlibrary{decorations.pathmorphing}

\tikzset{
	squiggly/.style={decorate, decoration={snake, amplitude=1mm, segment length=6mm}},
}

\newtheorem{theorem}{Theorem}[section]
\newtheorem{lemma}[theorem]{Lemma}
\newtheorem{proposition}[theorem]{Proposition}
\newtheorem{corollary}[theorem]{Corollary}

\theoremstyle{definition}
\theoremstyle{definition}
\theoremstyle{definition}\newtheorem{remark}[theorem]{Remark}

\def\ga{\mathfrak{a}}
\def\gb{\mathfrak{b}}

\def\gm{\mathfrak{m}}
\def\gp{\mathfrak{p}}

\def \CA{\mathcal{A}}

\def \CC{\mathcal{C}}
\def \CD{\mathcal{D}}

\def \CJ{\mathcal{J}}

\def \Prim{{\rm Prim}}
\def\Max{{\rm Max}}

\def \Spec{{\rm Spec}}
\def \ann{{\rm ann}}
\def \Frac{{\rm Frac}}

\def \mK{\Bbbk}
\def \Z{\mathbb{Z}}
\def \N{\mathbb{N}}

\def \d{\delta}
\def \s{ \sigma }
\def \l{\lambda}

\begin{document}

	\author{Tao Lu}
	
	\subjclass[2010]{16T05, 16D60, 16P40}
	
	\keywords{Super Jordan plane, Nichols algebra, prime ideal, primitive ideal, simple module}
	
	\address{School of Mathematical Science, Yangzhou University, Yangzhou, 225002, China}
	
	\email{taolu@yzu.edu.cn}
	
	\begin{abstract} 
		We study the ring-theoretic structure and representation theory of the super Jordan plane $\CJ$ over fields of characteristic different from $2$. We prove that $\CJ$ is prime and classify its prime, primitive, and maximal ideals. We determine its classical ring of quotients and classify the finite-dimensional simple modules, while relating infinite-dimensional simple modules to those of the first Weyl algebra. Our approach is based on showing that a localization of $\CJ$ is a matrix algebra over a localization of the first Weyl algebra. 
	\end{abstract}
	
	\title{Prime spectrum and representations of the super Jordan plane}
	\maketitle
	

\section{Introduction}

The Jordan plane and the super Jordan plane are two Nichols algebras that play an important role in the classification of pointed Hopf algebra over abelian groups with finite Gelfand--Kirillov dimension; see, for instance, \cite{Andruskiewitsch-Angiono-Heckenberger,Andruskiewitsch-Angiono-Heckenberger-1,Andruskiewitsch-Angiono-Heckenberger-2}. The Jordan plane is the algebra generated by $\mathbf{x}$ and $\mathbf{y}$ subject to the relation $\mathbf{y} \mathbf{x}-\mathbf{x}\mathbf{y}=-\frac{1}{2}\mathbf{x}^2$.  It is a Noetherian domain of Gelfand--Kirillov dimension $2$, whose structure and representation theory are well understood; see, for example, \cite{Artin-Shelter,Iyudu,Shirikov}.

The super Jordan plane, introduced in \cite{Andruskiewitsch-Angiono-Heckenberger}, is also a Noetherian algebra of Gelfand--Kirillov dimension $2$. In contrast to the Jordan plane, it is not a domain, and its structure exhibits phenomena that do not arise in the classical case. This makes the super Jordan plane a natural object for further investigation from both ring-theoretic and representation-theoretic perspectives.

The super Jordan plane $\CJ$ is the $\mK$-algebra generated by $x$ and $y$ subject to the defining relations
\begin{equation*}
x^2=0,\qquad y^2x -xy^2-xyx=0. 
\end{equation*}
Several aspects of its structure have been studied in recent years.
In characteristic zero, finite-dimensional simple modules and indecomposable modules of dimensions $2$ and $3$ were classified in \cite{Andruskiewitsch-Bagio-Della Flora-Flores-1}. The Hochschild homology and cohomology were computed in \cite{Reca-Solotar}. In \cite{Andruskiewitsch-Dumas}, it was shown that $\CJ$, viewed as a $\Z/2\Z$-graded algebra, is super-prime and admits a super-simple super-Artinian ring of fractions; moreover, the groups of superalgebra and braided Hopf algebra automorphisms were determined. A classification of finite-dimensional simple modules over a suitable bosonization of the super Jordan plane was given in \cite{Andruskiewitsch-Bagio-Della Flora-Flores-2}. The Drinfeld double of a suitable bosonization of $\CJ$ was studied in \cite{Andruskiewitsch-Pena Pollastri}. Liftings of both the Jordan plane and the super Jordan plane were computed in \cite{Andruskiewitsch-Angiono-Heckenberger-1,Andruskiewitsch-Angiono-Heckenberger-2}. 

Despite this progress, the ordinary ring-theoretic structure of $\CJ$ remains only partially understood. In particular, a question left open in \cite{Andruskiewitsch-Dumas} is whether $\CJ$ is prime as an (ungraded) algebra. The main goal of this paper is to resolve this question and to determine the prime spectrum, the classical ring of quotients, and the representation theory of $\CJ$.

Our main result is a classification of the prime, primitive, and maximal ideals of $\CJ$ over fields of characteristic different from $2$. In particular, this shows that $\CJ$ is a prime algebra and leads to a classification of the simple $\CJ$-modules.
In characteristic zero,  all finite-dimensional simple modules are one-dimensional, while a simple module is infinite-dimensional if and only if the element $s:=xy+yx$ acts invertibly. In that case, the module extends naturally to a simple module over the localization $\CJ_s$, whose representation theory is governed by that of the first Weyl algebra. In characteristic $p>2$, all simple modules are finite-dimensional of dimension either $1$ or $2p$, and we give explicit constructions of all such modules.

We also determine the classical ring of quotients $Q(\CJ)$. It was shown in \cite[\S 4]{Andruskiewitsch-Dumas} that $Q(\CJ)$ admits a structure of superalgebra and is a super-simple super-Artinian ring, although no explicit description was given. In characteristic zero we prove that $Q(\CJ)  \cong M_2(D_1(\mK))$, a matrix algebra over the Weyl skewfield $D_1(\mK)=\Frac (A_1)$, while in characteristic $p>2$ we show that $Q(\CJ)$ is a central simple algebra of dimension $(2p)^2$ over its center $\mK(s^p,y^{2p})$. In positive characteristic, we further prove that $\CJ$ is a prime PI algebra of PI degree~$2p$, and its localization $\CJ_s$ is an Azumaya algebra of rank $(2p)^2$ over its center.

A key ingredient in our approach is the observation that the localization $\CJ_s$ of $\CJ$ at the powers of the normal element $s$ is isomorphic to a matrix algebra over a localization $\CA$ of the first Weyl algebra; more precisely, $\CJ_s \cong M_2(\CA)$. Thus the algebras $\CJ_s$ and $\CA$ are Morita equivalent, and in particular  the representation theory of $\CJ_s$ is determined by that of $\CA$.  This observation allows many properties of $\CJ_s$ and $\CJ$ to be deduced directly from well-known facts about Weyl algebras and matrix rings, and provides a structural explanation for several previously known results, including the description of the center obtained in \cite{Reca-Solotar,Andruskiewitsch-Dumas}.

The paper is organized as follows. In Section \ref{Localization}, we recall basic properties of the super Jordan plane and study the localization $\CJ_s$. In Section \ref{SupJordan-zero}, we investigate $\CJ$ over a field of characteristic zero, focusing on its center, classical ring of quotients, prime and primitive ideals, and simple modules. In Section \ref{SupJordan-p}, we study $\CJ$ in positive characteristic, where we determine its center, PI degree, and classical ring of quotients, and classify its prime ideals, Azumaya locus, and simple modules.
 
 Throughout the paper, all modules are left modules. The base field $\mK$ is assumed to have characteristic different from $2$, with additional assumptions imposed when needed. We write $\mK^*=\mK\setminus\{0\}$ and $\N=\{0, 1, 2, \dots \}$. The center of a ring $R$ is denoted by $Z(R)$.

\section{A localization of the super Jordan plane}  \label{Localization}  

In this section, we recall basic properties of the super Jordan plane and study the localization of $\CJ$ at the powers of the normal element $s$, which plays a central role in our main results. We also determine the prime ideals of the factor algebra $\CJ/s\CJ$.
\subsection{The super Jordan plane}
 The super Jordan plane $\CJ$ is the $\mK$-algebra generated by $x$ and $y$ subject to the defining relations
\begin{equation*}
x^2=0, \qquad y s-sy-xs=0, \quad {\rm where}\,\, s:=xy+yx. 
\end{equation*}
These relations imply
\begin{equation*}
sx=xs, \qquad sy^2-y^2 s=-s^2. 
\end{equation*}
In particular,  the subalgebra of $\CJ$ generated by $s$ and $y^2$ is isomorphic to the Jordan plane. A straightforward induction also shows that for all $n\geq 1$,
\begin{equation} \label{y2s}  
(y^2-s)^n=\sum_{i=0}^n(-1)^i \binom{n}{i} i! y^{2(n-i)}s^i. 
\end{equation}

The algebra $\CJ$ is naturally $\N$-graded with $\deg\,x=\deg\,y=1$. By \cite{Andruskiewitsch-Angiono-Heckenberger},  $\CJ$ has Gelfand--Kirillov dimension 2, and the elements $x^i s^j y^k$ ($i=0,1$; $j,k\in \N$) form a PBW basis of $\CJ$. Furthermore, by \cite[Corollary 1.4]{Andruskiewitsch-Dumas} and \cite{Reca-Solotar}, $\CJ$ is Noetherian and has infinite global dimension. It was also shown in \cite[Proposition 1.3]{Andruskiewitsch-Dumas} that $\CJ$ can be presented as an Ore extension.

The super Jordan plane arises as a Nichols algebra associated to the braided vector space $(V,c)$ with basis $\{x,y\}$ and braiding given by
\begin{equation*}
\begin{aligned}
c(x \otimes x)&=-x\otimes x, &c(y \otimes x)&=-x\otimes y,\\
c(x \otimes y)&=-(y+x)\otimes x, & c(y \otimes y)&=(-y+x)\otimes y. 
\end{aligned}
\end{equation*}
This algebra is one of the basic examples of Nichols algebras associated to non-diagonal braidings.

\subsection{The Weyl algebra}
The first Weyl algebra $A_1$ is the associative $\mK$-algebra generated by $\partial$ and $x$ subject to the defining relation $\partial x-x \partial=1$. If ${\rm char}\,\mK=0$, then $A_1$ is a simple Noetherian domain with center $Z(A_1)=\mK$. Its skewfield of fractions $D_1(\mK):=\Frac(A_1)$ is called the Weyl skewfield. If ${\rm char}\,\mK=p>0$, then $Z(A_1)=\mK[\partial^p, x^p]$, and $A_1$ is an Azumaya algebra over its center.   

A standard normal-ordering identity states that for all $n\geq 1$, 
\begin{equation*}
(x \partial)^n=\sum_{k=1}^n S(n,k)x^k \partial^k
\end{equation*}
where $S(n,k)$ denotes the Stirling numbers of the second kind. In characteristic $p$, all intermediate coefficients vanish modulo $p$, yielding
\begin{equation} \label{normp} 
(x \partial)^p = x^p \partial^p +x\partial. 
\end{equation}

\subsection{The localization $\CJ_s$}
An element $a$ in a ring $R$ is said to be \emph{normal} if $aR=Ra$.  The element $s$ is normal in $\CJ$, since $sx=xs$ and $sy=(y-x)s$. 
We denote by $\CJ_s$ the localization of $\CJ$ at the powers of $s$ and set
\begin{equation*}
y':=ys^{-1}.
\end{equation*}
A direct computation shows that the following relations hold in $\CJ_s$:
\begin{equation*}
 xy'+y'x=1, \qquad y's-sy'=x. 
\end{equation*}

Let $\CD:=\mK[x]/\langle x^2 \rangle$ denote the algebra of dual numbers. Then $\CJ_s$ can be presented as an Ore extension of the form $\CJ_s=\CD[s^{\pm 1}][y';\s,\d]$, where $\s$ is the automorphism and $\d$ is the $\s$-derivation of $\CD[s^{\pm 1}]$ defined by $\s(x)=-x$, $\s(s)=s$; $\d(x)=1$ and $\d(s)=x$. In particular, $\CJ_s$ is a Noetherian algebra, and the set $\{x^is^jy^{\prime\, k}\mid i=0,1, j\in \Z, k\in \N\}$ forms a PBW basis of $\CJ_s$.

A set of $n\times n$ \emph{matrix units} of a ring $T$ is a collection $\{e_{ij} \mid 1 \leq i,j \leq n \} \subset T$ satisfying 
\begin{equation*}
\sum_{i=1}^n e_{ii}=1 \quad \text{and} \quad  e_{ij}e_{k\ell}=\delta_{jk}e_{i \ell}
\end{equation*}
for all $i,j,k,\ell$. The existence of such a set forces $T$ to be a full matrix ring over a suitable ring. Indeed, if we define $R=\{\sum_{k=1}^n e_{k1} a e_{1k} \mid a\in T \}$, then $R$ is a subring of $T$ and $T\cong M_n(R)$. Explicitly, the map
\begin{equation} \label{Matrixiso}  
\varphi:T\longrightarrow M_n(R),\qquad
a\longmapsto (r_{ij}),
\quad\text{where}\quad
r_{ij}=\sum_{k=1}^n e_{ki}ae_{jk},
\end{equation}
is a ring isomorphism (see, for instance, \cite[Proposition 1.1.3]{Rowen}).

The next lemma identifies $\CJ_s$ as a $2\times 2$ matrix algebra over a localization of the first Weyl algebra. 

\begin{lemma} \label{12Jan26}  
	Let $\CA$ be the subalgebra of $\CJ_s$ generated by $y^{\prime \,2}$ and $s^{\pm 1}$. Then $\CA$ is a localization of the first Weyl algebra and 
	$\CJ_s \cong M_2(\CA)$.   More precisely, the map 
	\begin{equation*}
	\varphi: \CJ_s \longrightarrow M_2(\CA), \quad x \longmapsto \begin{bmatrix}
	0 & 0 \\ 1 & 0
	\end{bmatrix}, \quad s^{\pm 1} \longmapsto \begin{bmatrix}
	s^{\pm 1} & 0 \\ 0 & s^{\pm 1}
	\end{bmatrix}, \quad y' \longmapsto \begin{bmatrix}
	0 & 1 \\ y^{\prime \,2} & 0
	\end{bmatrix},
	\end{equation*}
	defines an algebra isomorphism.
\end{lemma}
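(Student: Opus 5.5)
The plan is to verify that the proposed assignment respects the defining relations of $\CJ_s$, and then to produce a set of $2\times 2$ matrix units inside $\CJ_s$ whose centralizer is exactly $\CA$. The general isomorphism \eqref{Matrixiso} then identifies $\CJ_s$ with $M_2(\CA)$, and we check that this identification is the displayed map $\varphi$.

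\emph{Relations.} Since $\CJ_s=\CD[s^{\pm1}][y';\s,\d]$, the relations to check are $x^2=0$, $sx=xs$, $ss^{-1}=1$, $xy'+y'x=1$ and $y's-sy'=x$. In $\CJ_s$ we have $xy'^2=(1-y'x)y'=y'-y'(1-y'x)=y'^2x$. Using $y's-sy'=x$ and $sx=xs$,
\[
y'^2s-sy'^2=y'x+xy'=1 .
\]
So $y'^2$ and $s$ satisfy the Weyl relation $\partial t-t\partial=1$ with $\partial=y'^2$ and $t=s$. With this, the relations are routine $2\times 2$ matrix computations. For example, $\varphi(x)\varphi(y')+\varphi(y')\varphi(x)=\mathrm{diag}(1,1)$, and $[\varphi(y'),\varphi(s)]=\varphi(x)$ reduces exactly to $[y'^2,s]=1$. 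Hence $\varphi$ is a well-defined algebra homomorphism $\CJ_s\to M_2(\CA)$.

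\emph{Matrix units.} Set
\[
e_{11}=y'x,\quad e_{22}=xy',\quad e_{21}=x,\quad e_{12}=y'xy'.
\]
Using $x^2=0$ and $xy'+y'x=1$, one checks $e_{11}+e_{22}=1$ and $e_{ij}e_{k\ell}=\d_{jk}e_{i\ell}$; for instance $(y'x)^2=y'(1-y'x)x=y'x$. These map under $\varphi$ to the standard matrix units of $M_2(\CA)$.

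\emph{Centralizer.} Next we show that $s^{\pm1}$ and $y'^2$ commute with all $e_{ij}$. It suffices to check commutation with $x$ and with $e_{12}$. For $s$ and $x$ this is immediate. We have $sy'x=(y's-x)x=y'xs$, and
\[
sy'xy'=y'xsy'=y'x(y's-x)=y'xy's .
\]
For $y'^2$, commutation with $x$ was shown above, and commutation with $y'$ is clear. Thus $\CA\subseteq R$, where $R$ is the corner ring of \eqref{Matrixiso}.

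Conversely, $R$ is spanned by the compressions $\sum_k e_{k1}ae_{1k}$ of the monomials $a$ in $x,s^{\pm1},y'$. Rewriting $y'=e_{12}+y'^2e_{21}$ and $x=e_{21}$, every element of $\CJ_s$ is a $\CA$-combination of the $e_{ij}$, so $R=\CA$. The map \eqref{Matrixiso} then gives $\CJ_s\cong M_2(\CA)$, and on generators it sends $x,s^{\pm1},y'$ to the stated matrices.

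\emph{Identifying $\CA$.} It remains to see that the surjection $A_1[t^{-1}]\to\CA$, $\partial\mapsto y'^2$, $t\mapsto s$, is injective. Its image has spanning set $\{s^jy'^{2k}\}$, and these elements are linearly independent by the PBW basis $\{x^is^jy'^k\}$ of $\CJ_s$. Since $\{t^j\partial^k\}$ is a basis of $A_1[t^{-1}]$, the map is an isomorphism.

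The main obstacle is the identification $R=\CA$, that is, showing that nothing larger than $\CA$ appears in the corner ring. Writing $y'$ and $x$ explicitly as $\CA$-combinations of matrix units is the cleanest route to this.
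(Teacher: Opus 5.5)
Your proof is correct and is essentially the paper's. You use the same matrix units $e_{11}=y'x$, $e_{12}=y'xy'$, $e_{21}=x$, $e_{22}=xy'=1-e_{11}$ in \eqref{Matrixiso}. The paper gets $R=\CA$ from the decomposition $\CJ_s=\CA\oplus\CA x\oplus\CA y'\oplus\CA e$ and an explicit compression. You get it instead from two facts: $\CA$ centralizes the $e_{ij}$, and $x$, $s^{\pm1}$, $y'=e_{12}+y'^{2}e_{21}$ are $\CA$-combinations of them.

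Your PBW-based injectivity check for $A_1[t^{-1}]\to\CA$ is more careful than the paper's one-line inference from $[y'^{2},s]=1$. That inference works in characteristic $0$, where $A_1$ is simple, but not in characteristic $p$, where $A_1$ has proper nonzero quotients.
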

\begin{proof}
	First observe that $[y^{\prime \,2},s]=[y',s]y'+y'[y',s]=xy'+y'x=1$.  Therefore the subalgebra generated by $y^{\prime \,2}$ and $s$ is isomorphic to the first Weyl algebra. Since $s$ is invertible in $\CA$, it follows that $\CA$ is a localization of the first Weyl algebra.
	
	Set $e:=y'x=ys^{-1}x \in \CJ_s$. Then $e^2=e$, so $e$ is an idempotent.  A direct computation shows that
	\begin{equation} \label{idemp} 
	ex=0, \quad xe=x, \quad ey'= y'(1-e), \quad y'e=(1-e)y',  \quad es=se.
	\end{equation}
	In particular, $e$ commutes with every element of $\CA$. Moreover, since $xy^{\prime \,2}=y^{\prime \,2}x$ and $xs=sx$, the element $x$ also commutes with  $\CA$.  
	Define
	\begin{equation*}
		\begin{aligned}
				e_{11}:&=e, \quad e_{12}:=ey', \\ e_{21}:&=x, \quad e_{22}:=1-e. 
		\end{aligned}
	\end{equation*}  
	Using \eqref{idemp}, one checks that $\{e_{ij}\mid 1 \leq i, j \leq 2\}$ is a set of matrix units in $\CJ_s$. Hence $\CJ_s \cong M_2(R)$ for a suitable ring $R$. From the construction (see \eqref{Matrixiso}), we may take $R=\{e_{11} a e_{11}+e_{21} a e_{12}\mid a \in \CJ_s\}$. 
	
	From the PBW basis, the algebra $\CJ_s$ is a free $\CA$-module of rank 4 with decomposition
	$$\CJ_s=\CA \oplus \CA x \oplus \CA y' \oplus \CA e.$$
	Thus every $a \in \CJ_s$ can be uniquely written as $a=a_1+a_2 x+ a_3 y'+a_4 e$ where $a_i \in \CA$. 	
	Using \eqref{idemp}, we obtain $exe=0$, $ey'e=0$, and $xey'=xy'=(1-e)$. Since $e$ and $x$ commute with $\CA$, it follows that $e_{11}ae_{11}=eae=a_1 e+a_4 e$, and $e_{21}ae_{12}=xaey'=a_1(1-e)+a_4(1-e)$. Hence $e_{11}ae_{11}+e_{21}ae_{12}=a_1+a_4$. Since $a_1$ and $a_4$ vary freely in $\CA$, we conclude that $R=\CA$. Therefore, $\CJ_s \cong M_2(\CA)$.  The explicit form of the isomorphism  follows from \eqref{Matrixiso}. 
\end{proof}

\subsection{Prime ideals of $\CJ/s\CJ$}
Recall that a proper ideal $\gp$ of a ring $R$ is said to be \emph{prime} if,  for all ideals $\ga$ and $\gb$ of $R$, the inclusion $\ga \gb \subseteq \gp$ implies that $\ga \subseteq \gp$ or $\gb \subseteq \gp$. 
A prime ideal $\gp$ is called \emph{completely prime} if the factor ring $R/\gp$ is a domain. An ideal $\gp$ of a ring $R$ is called a \emph{primitive ideal} if it is the annihilator of some simple $R$-module. It is well known that primitive ideals are prime and maximal ideals are primitive.

Define $\Lambda:=\CJ/s\CJ$. Then 
\begin{equation*}
\Lambda \cong \mK \langle x, y\mid xy+yx=0, \,\,x^2=0 \rangle. 
\end{equation*}
As a vector space, $\Lambda=\mK[y] \oplus x \mK[y]$. The following proposition describes the prime ideals of $\Lambda$. 
\begin{proposition} \label{24Jan26}  
	Assume that $\mK$ is an algebraically closed field of arbitrary characteristic. Then 
	the prime spectrum of $\Lambda$ is  
	\begin{equation*}
	\Spec(\Lambda)=\{ \langle x \rangle\} \,\, \cup \,\,\{\langle x, \,y-\alpha \rangle \mid \alpha \in \mK   \}. 
	\end{equation*}
\end{proposition}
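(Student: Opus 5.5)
The plan is to show that every prime ideal of $\Lambda$ contains $x$. Once that is done, primes of $\Lambda$ correspond to primes of $\Lambda/\langle x\rangle\cong\mK[y]$, and the result follows.

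First I compute the two-sided ideal generated by $x$. Since $yx=-xy$, we have $\Lambda x\Lambda=x\Lambda=x\mK[y]$. Indeed, any product $f(y)\,x\,g(y)$ can be rewritten as $x\,f(-y)g(y)$. The product of this ideal with itself is
\begin{equation*}
\langle x\rangle\langle x\rangle = x\mK[y]\,x\mK[y]=x^2\mK[-y]\mK[y]=0 .
\end{equation*}
So $\langle x\rangle$ is a square-zero ideal, and every prime $\gp$ satisfies $\langle x\rangle^2=0\subseteq\gp$, hence $x\in\gp$. This step is the crux of the argument. It is straightforward once we notice that $x$ is normal. The same argument works in any characteristic, including characteristic $2$, where the relation $xy+yx=0$ just says $x$ and $y$ commute.

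Next I identify the quotient. Since $\Lambda=\mK[y]\oplus x\mK[y]$ and $\langle x\rangle=x\mK[y]$, we get $\Lambda/\langle x\rangle\cong\mK[y]$, a commutative polynomial ring. Primes of $\Lambda$ are exactly the preimages of primes of $\mK[y]$. Because $\mK$ is algebraically closed, these primes are $0$ and $(y-\alpha)$ for $\alpha\in\mK$. Their preimages are $\langle x\rangle$ and $\langle x,y-\alpha\rangle$ respectively.

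Finally I check that each listed ideal really is prime. The quotients are $\mK[y]$ and $\mK$, both domains, so each listed ideal is even completely prime. I also check that the listed ideals are distinct: they have different quotients, and $\langle x, y-\alpha\rangle\neq\langle x,y-\beta\rangle$ for $\alpha\neq\beta$. This gives exactly the stated description of $\Spec(\Lambda)$.
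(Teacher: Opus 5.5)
Your proof is correct and follows the paper's argument. The ideal $\langle x\rangle$ squares to zero, so every prime contains $x$, and the primes then correspond to those of $\Lambda/\langle x\rangle\cong\mK[y]$; your explicit computation $\langle x\rangle=x\mK[y]$ just spells out the paper's one-line claim $\langle x\rangle^2=\langle x^2\rangle=0$.
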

\begin{proof}
	Let $P$ be a prime ideal of $\Lambda$. Since $\langle x\rangle^2 =\langle x^2 \rangle =\langle 0 \rangle \subseteq P$, the primeness of $P$ implies that $\langle x \rangle \subseteq P$. Hence every prime ideal of $\Lambda$ contains $x$.  It follows that the prime ideals of $\Lambda$ are in bijection with the prime ideals of the factor algebra $\Lambda /\langle x\rangle \cong \mK[y]$. Since $\mK$ is algebraically closed, we have $\Spec(\mK[y])=\{ \langle 0 \rangle\} \,\cup\,\{\langle y-\alpha \rangle \mid \alpha \in \mK \}$. The result follows by lifting these ideals to $\Lambda$. 
\end{proof}

\section{The super Jordan plane in characteristic zero} \label{SupJordan-zero} 

Throughout this section, we work over a field $\mK$ of characteristic zero.
We study the super Jordan plane $\CJ$, with emphasis on its
center, classical ring of quotients, prime and primitive ideals, and simple modules.
\subsection{Center and quotient ring of $\CJ$}
Let $\CC_R$ denote the set of regular elements of a ring $R$. Then $\CC_R$ is a multiplicative set. If $\CC_R$ is a left (resp. right) Ore set, the localization $Q_{\ell}(R):=\CC_R^{-1}R$ (resp. $Q_{r}(R):=R \CC_R^{-1}$) is called the left (resp. right) quotient ring of $R$. If $\CC_R$ is an Ore set (that is, both a left and a right Ore set), then $Q_{\ell}(R)$ and $Q_r(R)$ are isomorphic, and the ring $Q(R):=\CC_R^{-1}R=R \CC_R^{-1}$ is called the \emph{classical quotient ring} of $R$.

 Since $\CJ$ is Noetherian, the set of regular elements satisfies the Ore condition.
We now describe the center, and classical quotient ring of $\CJ$ in characteristic zero.
\begin{proposition} \label{21Jan26}  
	Assume ${\rm char}\,\mK=0$. 
	\begin{enumerate}
		\item The centers of $\CJ_s$ and $\CJ$ are trivial: $Z(\CJ_s)=Z(\CJ)=\mK$. 
		\item The algebra $\CJ_s$ is simple, and hence prime. 
		\item The classical quotient ring of $\CJ$ is $Q(\CJ)  \cong M_2(D_1(\mK))$, a central simple algebra over the Weyl skewfield $D_1(\mK)$. 
	\end{enumerate}
\end{proposition}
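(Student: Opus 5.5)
The whole plan rests on Lemma \ref{12Jan26}: once $\CJ_s\cong M_2(\CA)$ with $\CA$ a localization of $A_1$, each part follows from standard facts about Weyl algebras and matrix rings.

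\emph{Properties of $\CA$.} In $\CA$ we have $[y'^{\,2},s]=1$. Hence $\CA$ is the localization of $A_1$ (with $\partial=y'^{\,2}$ and the role of $x$ played by $s$) at the Ore set $\{s^n\}$. Since $A_1$ is simple in characteristic zero, every nonzero ideal $I$ of $\CA$ satisfies $I\cap A_1\neq 0$: clear denominators by multiplying by a power of $s$. Then $I\cap A_1=A_1$, so $I=\CA$, and $\CA$ is simple.

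For the center, take $z\in Z(\CA)$. Then $[z,s]=0$ and $[y'^{\,2},z]=0$. Write $z=\sum_{j} f_j(s)\,y'^{\,2j}$ with $f_j\in\mK[s^{\pm1}]$; this form is available from the PBW basis. Since $[y'^{\,2j},s]=j\,y'^{\,2(j-1)}$, we get $[z,s]=\sum_j j f_j\,y'^{\,2(j-1)}$. Characteristic zero then forces $z=f_0(s)$. Next, $[y'^{\,2},f_0(s)]=f_0'(s)=0$, so $f_0$ is constant. Thus $Z(\CA)=\mK$.

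\emph{Parts (1) and (2).} It follows that $Z(\CJ_s)\cong Z(M_2(\CA))=Z(\CA)=\mK$. Also $\CJ_s\cong M_2(\CA)$ is simple, since the ideals of $M_2(\CA)$ are $M_2(I)$ for ideals $I$ of $\CA$; in particular $\CJ_s$ is prime.

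For $Z(\CJ)$, note that $s$ is a regular element of $\CJ$. Indeed, $s$ is normal, and the PBW basis $x^is^jy^k$ shows that left multiplication by $s$ is injective; right multiplication follows by normality, using $sy=(y-x)s$. So $\CJ\subseteq\CJ_s$. A central element of $\CJ$ commutes with $s^{-1}$ as well, hence is central in $\CJ_s$. Therefore $\mK\subseteq Z(\CJ)\subseteq Z(\CJ_s)\cap\CJ=\mK$.

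\emph{Part (3).} Since $s$ is regular and $\CJ$ is Noetherian, $\CJ\subseteq\CJ_s\subseteq Q(\CJ)$ and $Q(\CJ)=Q(\CJ_s)$: both are obtained by inverting all regular elements. We must also check that regular elements of $\CJ$ stay regular in $\CJ_s$, which holds because $\CJ_s$ is an Ore localization. Now $\CA$ is a localization of $A_1$, so $Q(\CA)=D_1(\mK)$. For a prime Noetherian ring $R$, one has $Q(M_n(R))\cong M_n(Q(R))$. Hence $Q(\CJ)\cong Q(M_2(\CA))\cong M_2(D_1(\mK))$, which is simple Artinian with center $Z(D_1(\mK))$. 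That center is $\mK$ in characteristic zero, by the standard fact that the Weyl skewfield has center $\mK$.

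\emph{Main obstacle.} The only delicate point is the identification $Q(\CJ)=Q(\CJ_s)$ together with $Q(M_2(\CA))=M_2(Q(\CA))$. I would handle it with Goldie's theorem. $\CJ_s$ is prime Noetherian, so $Q(\CJ_s)$ exists and is simple Artinian. $M_2(D_1)$ is simple Artinian and contains $M_2(\CA)$, and every element of $M_2(D_1)$ can be written as $c^{-1}a$, with $c$ a scalar matrix over a common left denominator in $A_1$. This exhibits $M_2(D_1)$ as a classical quotient ring of $M_2(\CA)$, which completes the identification.
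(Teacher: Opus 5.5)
Your proposal is correct and follows essentially the same route as the paper. Both arguments reduce everything to $\CJ_s\cong M_2(\CA)$, read off the trivial center and simplicity from $\CA$, get $Z(\CJ)=\CJ\cap Z(\CJ_s)=\mK$, and use $Q(\CJ)=Q(\CJ_s)\cong M_2(Q(\CA))\cong M_2(D_1(\mK))$. The only difference is that you prove directly the facts the paper takes as known: simplicity of $\CA$ by clearing denominators, $Z(\CA)=\mK$ by a commutator computation, and $Q(M_2(\CA))\cong M_2(D_1(\mK))$ via common denominators instead of a citation.
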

\begin{proof}
	(1) Since $Z(\CA)=\mK$, Lemma~\ref{12Jan26} implies that $Z(\CJ_s)=\mK$. Thus, $Z(\CJ)=\CJ\cap Z(\CJ_s)=\mK$. 
	
	(2) The algebra $\CA$ is simple, hence the matrix algebra $\CJ_s \cong M_2(\CA)$ is simple, and therefore prime.
	
    (3) By Lemma~\ref{12Jan26} and \cite[Corollary 3.1.5]{MR}, we have 
$Q(\CJ_s) \cong M_2\bigl(Q(\CA)\bigr)
\cong M_2\bigl(D_1(\mK)\bigr).$
Since localization at regular elements yields $Q(\CJ)=Q(\CJ_s)$, the result follows. 
\end{proof}

For algebraically closed fields $\mK$ of characteristic zero, the equality $Z(\CJ)=\mK$ was proved in \cite[Theorem 3.12]{Reca-Solotar} using Hochschild cohomology. Indeed, the zeroth Hochschild cohomology ${\rm H}^0(\CJ,\CJ)$ coincides with the center $Z(\CJ)$.

\subsection{Prime ideals of $\CJ$}

The following theorem gives an explicit description of the prime, completely prime, primitive and maximal ideals of the super Jordan plane $\CJ$ in characteristic zero.
\begin{theorem} \label{A24Jan26}   
	Assume that $\mK$ is algebraically closed of characteristic zero. 
	\begin{enumerate}
		\item The prime spectrum of $\CJ$ is 
		\begin{equation*}
		\Spec(\CJ)=\{ \langle 0 \rangle, \, \langle s, \,x \rangle \} \,\,\cup\,\,\{ \langle s, \, x,\, y-\alpha \rangle \mid \alpha \in \mK \}. 		
		\end{equation*}
		
		\item Every nonzero prime ideal of $\CJ$ is completely prime. 
		
		\item The primitive spectrum of $\CJ$ is $\Prim(\CJ)=\{ \langle 0 \rangle \} \,\,\cup\,\,\{ \langle s, \, x,\, y-\alpha \rangle \mid \alpha \in \mK \}$. 
		
		\item The maximal spectrum of $\CJ$ is $\Max(\CJ)=\{ \langle s, \, x,\, y-\alpha \rangle \mid \alpha \in \mK \}$. 
	\end{enumerate}
\end{theorem}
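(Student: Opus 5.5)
The approach is to split $\Spec(\CJ)$ according to whether a prime ideal contains the normal element $s$. This uses the standard correspondence for localization at the powers of a normal element in a Noetherian ring, namely
\[
\{\gp\in\Spec(\CJ) : s\notin\gp\}\;\longleftrightarrow\;\Spec(\CJ_s), \qquad \gp\mapsto \gp\CJ_s,
\]
with inverse $Q\mapsto Q\cap\CJ$. This holds because $\{s^n\}$ is an Ore set, as $s$ is normal and $\CJ$ is Noetherian. By Proposition~\ref{21Jan26}(2), $\CJ_s$ is simple, so its only prime ideal is $\langle 0\rangle$. Since $s$ is regular (the PBW basis shows $\CJ\to\CJ_s$ is injective), the contraction of $\langle 0 \rangle$ is $\langle 0\rangle$. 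Hence the only prime not containing $s$ is $\langle 0\rangle$, and it is prime because $\CJ\subseteq\CJ_s$ with $\CJ_s$ prime. (Alternatively, primeness of $\langle 0 \rangle$ follows directly: if $IJ=0$ then $(I\CJ_s)(J\CJ_s)=IJ\CJ_s=0$, using normality of $s$.) The primes containing $s$ are in bijection with $\Spec(\Lambda)$, and Proposition~\ref{24Jan26} gives exactly $\langle s,x\rangle$ and $\langle s,x,y-\alpha\rangle$. This proves (1).

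For (2), the factor rings are $\CJ/\langle s,x\rangle\cong\Lambda/\langle x\rangle\cong\mK[y]$ and $\CJ/\langle s,x,y-\alpha\rangle\cong\mK$, both of which are domains. (The zero ideal is not completely prime since $x^2=0$, which is why the statement says ``nonzero''.)

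For (4), the ideals $\langle s,x,y-\alpha\rangle$ have codimension one, so they are maximal. The ideal $\langle s,x\rangle$ is properly contained in them. The zero ideal is properly contained in $\langle s, x \rangle$. Hence neither of these is maximal.

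For (3), maximal ideals are primitive. It remains to see that $\langle 0\rangle$ is primitive and that $\langle s,x\rangle$ is not. For $\langle s,x\rangle$: a primitive ideal $P$ containing $s$ gives a primitive ideal of $\CJ/\langle s,x\rangle\cong\mK[y]$. Since $\mK[y]$ is commutative, its primitive ideals are maximal, so $P$ must be one of the $\langle s,x,y-\alpha\rangle$. For $\langle 0\rangle$, take any simple $\CJ_s$-module $M$; these exist, since $\CJ_s\cong M_2(\CA)$ is Morita equivalent to $\CA$ and one can take for instance $M=\CJ_s/L$ for a maximal left ideal $L$. Its annihilator in $\CJ_s$ is a proper ideal, hence zero because $\CJ_s$ is simple. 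Then $M$ restricted to $\CJ$ is still simple. Indeed, if $N\subseteq M$ is a nonzero $\CJ$-submodule, then $\CJ_sN=\sum_n s^{-n}N$ is a $\CJ_s$-submodule, by normality of $s$ and the fact that $s^{-1}\CJ=\CJ s^{-1}$. So $\CJ_sN=M$, and since $s$ acts bijectively on $M$ while $s^{-n}N$ is itself a $\CJ$-submodule, one needs $N=M$. This is the delicate point: the cleanest route is to note that $s^{n}M=M$ and that $N\supseteq s^nN'$ arguments may fail. So I would instead argue that $\ann_\CJ(M)=\CJ\cap\ann_{\CJ_s}(M)=0$ and choose $M$ so that it is simple over $\CJ$.

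The main obstacle is exactly this last step: producing a simple $\CJ$-module with zero annihilator. I would first try to show $\CJ_sN=N$ directly. Given $m\in N$, the set $\{a\in\CJ : am\in N\}$ is a left ideal containing all $s^k$ acting on $N$. Alternatively, and more robustly, I would invoke the standard fact that in a Noetherian algebra over an uncountable field, or more generally a Jacobson ring of countable dimension, a prime ideal that is locally closed in $\Spec$ is primitive. Here $\langle 0\rangle$ is locally closed, because the intersection of all nonzero primes contains $s$ (in fact it contains $\langle s,x\rangle$), which is not in $\langle 0\rangle$. Using the Nullstellensatz for $\CJ$, which is affine Noetherian of finite GK dimension, this would show that $\langle 0\rangle$ is primitive.
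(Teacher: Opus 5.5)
Your arguments for (1), (2), (4) and for the non-primitivity of $\langle s,x\rangle$ are the paper's. The route you settle on for $\langle 0\rangle$ (it is locally closed, hence primitive) is exactly the paper's one-line step. You were right to drop the restriction idea: a simple $\CJ_s$-module need not remain simple over $\CJ$, which is the $\CJ$-socle issue in the paper's Remark.

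The gap is in the justification you give for that step. The implication ``locally closed $\Rightarrow$ primitive'' needs $\CJ$ to be a Jacobson ring, and here that is not an independent input. Every nonzero prime contains $s$, and $\langle s,x\rangle=\bigcap_\beta\langle s,x,y-\beta\rangle$, so $\CJ$ is Jacobson if and only if $\langle 0\rangle$ is primitive. Your stated reason, that $\CJ$ is affine Noetherian of finite GK dimension, is not a standard criterion for the Nullstellensatz. The usual criteria are being PI, having countable dimension over an uncountable field, or specific Ore-extension theorems. Moreover, an algebraically closed field of characteristic $0$ such as $\overline{\mathbb{Q}}$ is countable. So as written the key step of (3) is unproved; the paper leaves the same point implicit.

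The gap is easy to close directly. Suppose $\langle 0\rangle$ were not primitive.
\begin{enumerate}
\item Every primitive ideal would then be a nonzero prime, so it contains $s$ by (1).
\item Hence $s$ lies in the Jacobson radical of $\CJ$, and $1-s$ has a left inverse $a=\sum_{m=0}^{N}a_m$ with each $a_m$ homogeneous of degree $m$.
\item Compare homogeneous components of $a(1-s)=1$ in the connected $\N$-graded algebra $\CJ$, where $\deg s=2$. This gives $a_0=1$, $a_1=0$ and $a_m=a_{m-2}s$ for $m\ge 2$.
\item Therefore $a_{2k}=s^k$ for every $k$, and $s^k\neq 0$ by the PBW basis. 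This contradicts $a_m=0$ for $m>N$.
\end{enumerate}

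Alternatively, it suffices to exhibit one infinite-dimensional simple $\CJ$-module, for instance the modules $M_\alpha$ constructed later in the paper. Its annihilator is a prime from the list in (1). It cannot be $\langle s,x,y-\beta\rangle$, since the factor is $\mK$, and it cannot be $\langle s,x\rangle$, since $\mK[y]$ is not primitive. So the annihilator is $\langle 0\rangle$.
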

\begin{proof}
	(1) Let $\Spec(\CJ, s)$ (resp. $\Spec_s(\CJ)$) denote the set of prime ideals of $\CJ$ containing (resp. not containing) $s$. Then $\Spec(\CJ)=\Spec(\CJ, s) \cup \Spec_s(\CJ)$. Prime ideals of $\CJ$ containing $s$ are in bijection with prime ideals of the factor algebra $\Lambda=\CJ/s\CJ$. By Proposition \ref{24Jan26},
	\begin{equation*}
	\Spec(\CJ, s)= \{ \langle s, \,x \rangle\} \,\, \cup \,\,\{\langle s,\,x, \,y-\alpha \rangle \mid \alpha \in \mK   \}. 
	\end{equation*}
	Since $s$ is a normal element of $\CJ$, prime ideals of $\CJ$ not containing $s$ are in bijection with prime ideals of the localization $\CJ_s$. By Proposition \ref{21Jan26}(2), the algebra $\CJ_s$ is simple, and hence $\Spec(\CJ_s)=\{ \langle 0 \rangle \}$. It follows that $\Spec_s(\CJ)=\{\langle 0 \rangle \}$, which yields the stated description of $\Spec(\CJ)$. 
	
	(2) The zero ideal $\langle 0 \rangle$ is not completely prime, since $\CJ$ is not a domain. On the other hand,
	\begin{equation*}
	\CJ/\langle s, \, x \rangle \cong \mK[y] \quad {\rm and} \quad  \CJ/\langle s, \, x,\,y-\alpha \rangle \cong \mK,
	\end{equation*}
	which are domains. Hence every nonzero prime ideal of $\CJ$ is completely prime. 
	
	(3) The zero ideal $\langle 0 \rangle$ is locally closed in $\Spec(\CJ)$, and hence primitive. The ideals $\langle s,\,x, \,y-\alpha \rangle $ are maximal and therefore primitive. By contrast, the ideal $\langle s, x \rangle$ is not primitive, since $\CJ/\langle s, x \rangle \cong \mK[y]$ is not a primitive algebra.
	
	(4) This follows immediately from the inclusions among the prime ideals.
\end{proof}

\subsection{Simple $\CJ$-modules}
For an algebra $R$, let $\widehat{R}$ denote the set of isomorphism classes $[M]$ of simple $R$-modules $M$. 

\begin{corollary} \label{1Feb26} 
	Assume that $\mK$ is algebraically closed of characteristic $0$.
	\begin{enumerate}
		\item The set of  isomorphism classes of finite-dimensional simple $\CJ$-modules is 
		\begin{equation*}
		\widehat{\CJ}({\rm fin. \,dim.})= \{ [S_{\alpha}] \mid \alpha \in \mK \}, \quad \text{where }\,\, S_{\alpha}:=\CJ/\langle s, \, x, \, y-\alpha \rangle. 
		\end{equation*}
		In particular, every finite-dimensional simple $\CJ$-module is one-dimensional.
		
		\item A simple $\CJ$-module $M$ is infinite-dimensional if and only if it is faithful, equivalently, if and only if the element $s$ acts bijectively on $M$. In this case, $M$ is naturally a simple $\CJ_s$-module.
	\end{enumerate}	
\end{corollary}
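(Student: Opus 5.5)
The plan is to read everything off the classification of primitive ideals in Theorem~\ref{A24Jan26}(3). Let $M$ be a simple $\CJ$-module and $P=\ann_{\CJ}(M)$. Then $P$ is primitive, so either $P=\langle 0\rangle$ or $P=\langle s,x,y-\alpha\rangle$ for some $\alpha\in\mK$.

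\emph{Second case: $P=\langle s,x,y-\alpha\rangle$.} Here $M$ is a simple module over $\CJ/P\cong\mK$. Hence $M\cong S_\alpha$, and $M$ is one-dimensional. The modules $S_\alpha$ are pairwise non-isomorphic because their annihilators differ. In this case $s$ acts as zero.

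\emph{First case: $P=\langle 0\rangle$, i.e.\ $M$ is faithful.} I will show that $s$ acts bijectively and that $M$ is infinite-dimensional.

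For bijectivity, consider $N=\{m\in M\mid sm=0\}$. Since $s$ is normal with $sx=xs$ and $sy=(y-x)s$, we have $s\CJ=\CJ s$, so $N$ is a submodule. Indeed, for $m\in N$ and $a\in\CJ$ write $sa=a's$; then $s(am)=a'sm=0$. Also $sM$ is a submodule, because $a(sM)=s(a''M)\subseteq sM$. By simplicity, $N$ is $0$ or $M$, and likewise $sM$ is $0$ or $M$. If $N=M$ or $sM=0$, then $s\in\ann(M)=0$, a contradiction. Hence $s$ is injective and surjective on $M$.

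Once $s$ is bijective, $M$ extends uniquely to a $\CJ_s$-module by letting $s^{-1}$ act as the inverse of $s$. This is compatible with the Ore localization, since $s^{-1}a=a''s^{-1}$ holds appropriately. Any $\CJ_s$-submodule is in particular a $\CJ$-submodule, so the extended module is simple over $\CJ_s$.

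To get infinite dimension: if $\dim M<\infty$, then $\CJ/\ann(M)=\CJ$ would embed in $\operatorname{End}_\mK(M)$, which is finite-dimensional. This is impossible, since $\CJ$ is infinite-dimensional by its PBW basis. Alternatively, $M$ would be a finite-dimensional module over $\CJ_s\cong M_2(\CA)$, hence Morita-equivalently over $\CA$. But a localization of $A_1$ in characteristic $0$ has no nonzero finite-dimensional modules, by the usual trace argument on $[y'^2,s]=1$.

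Combining the two cases gives the full dichotomy:
\[
\dim M<\infty \iff P\neq 0 \iff s \text{ does not act bijectively},
\]
and in the faithful case $s$ acts bijectively. Every finite-dimensional simple module is therefore some $S_\alpha$, which proves (1), and (2) follows as well.

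The only slightly delicate point is checking that $N$ and $sM$ are submodules, using the normality of $s$; everything else is immediate from Theorem~\ref{A24Jan26}.
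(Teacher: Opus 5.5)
Your proposal is correct and takes essentially the same route as the paper: both read everything off Theorem~\ref{A24Jan26}, and both use the normality of $s$ to show that $s$ acts either by zero or bijectively. The only difference is in part (1). The paper argues that the annihilator of a finite-dimensional simple module is maximal and then invokes $\Max(\CJ)$. You instead use $\Prim(\CJ)$ together with the embedding $\CJ\hookrightarrow\operatorname{End}_{\mK}(M)$, which rules out faithful finite-dimensional simple modules; this also makes explicit the implication ``faithful $\Rightarrow$ infinite-dimensional'' that the paper leaves implicit.
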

\begin{proof}
	(1) Let $M$ be a finite-dimensional simple $\CJ$-module. Then its annihilator $\ann_{\CJ}$(M) is a maximal ideal. By Theorem \ref{A24Jan26}(4), $\ann_{\CJ}(M)=\langle s,\, x,\,y-\alpha \rangle$ for some $\alpha \in \mK$. Since $\CJ/\langle s,\, x,\,y-\alpha \rangle \cong \mK$ is one-dimensional, it follows that $M \cong S_{\alpha}$. 
	
	(2) Let $M$ be a simple $\CJ$-module. Then $M$ is simple over $\CJ/\ann_{\CJ}(M)$.  By Theorem~\ref{A24Jan26}(3), the annihilator $\ann_{\CJ}(M)$ is either zero or of the form $\langle s,\, x,\,y-\alpha \rangle$. In the latter case, $M$ is one-dimensional. Thus $M$ is infinite-dimensional if and only if $\ann_{\CJ}(M)=\langle 0  \rangle$, that is, if and only if $M$ is faithful. 
	Since $s$ is normal, it acts either by zero or bijectively on any simple module. Hence  \(\ann_{\CJ}(M)=\langle 0\rangle\) if and only if $s$ acts bijectively on $M$. In this case, $M$ extends to a simple $\CJ_s$-module.
\end{proof}

\begin{remark}
		A classification of the finite-dimensional simple $\CJ$-modules was previously obtained in \cite[Theorem 2.6]{Andruskiewitsch-Bagio-Della Flora-Flores-1} by different methods.		
		Corollary \ref{1Feb26} shows that the infinite-dimensional representation theory of $\CJ$ is governed by the simple algebra $\CJ_s$. Indeed, every infinite-dimensional simple $\CJ$-module extends uniquely to a simple $\CJ_s$-module, yielding an injection $$\widehat{\CJ}({\rm infin. \,dim.}) \hookrightarrow \widehat{\CJ_s}.$$
		Via the isomorphism $\CJ_s\cong M_2(\CA)$ and Morita equivalence, these modules correspond to a subclass of simple $\CA$-modules. Moreover, by \cite[Lemma 3.1]{Bavula-Oystaeyn},
		the localization functor induces a bijection
		\begin{equation*}
		\widehat{\CJ}({\rm infin. \,dim.}) \simeq \widehat{\CJ_s}(\CJ\text{-socle}), \quad [M] \mapsto[\CJ_s \otimes_{\CJ} M]
		\end{equation*}
		whose inverse is given by the socle functor ${\rm Soc}: [N] \mapsto [{\rm Soc}_{\CJ} (N)]$, where $\widehat{\CJ_s}(\CJ\text{-socle})$ consists of those simple $\CJ_s$-modules whose restriction to $\CJ$ has nonzero socle. 
\end{remark}

We present a family of infinite-dimensional simple $\CJ$-modules $M_{\alpha}$, indexed by $\alpha \in \mK^*$.
As a vector space $M_{\alpha}=\bigoplus_{i\in\N} \mK e_i$. The action of the generators $x,s,y$ of $\CJ$ on $M_{\alpha}$ is given by
\begin{equation*}
x e_i= \begin{cases}
0,  &i \,\, \text{even}\\ 
e_{i-1}, &i \,\, \text{odd}
\end{cases}, \quad s e_i= \begin{cases}
\alpha e_{i}-\frac{i}{2} e_{i-2},  &i \,\, \text{even}\\ 
\alpha e_{i}-\frac{i-1}{2} e_{i-2}, &i \,\, \text{odd}
\end{cases},  \quad y e_i= \begin{cases}
\alpha e_{i+1}-\frac{i}{2} e_{i-1},  &i \,\, \text{even}\\ 
\alpha e_{i+1}-\frac{i-1}{2} e_{i-1}, &i \,\, \text{odd}
\end{cases}. 
\end{equation*}
Here we adopt the convention $e_{-2}=e_{-1}=0$. A straightforward verification shows that these formulas satisfy the defining relations of $\CJ$, and thus define a $\CJ$-module structure on $M_{\alpha}$. The actions of $x$ and $s-\alpha$ are illustrated in the diagram below.
\begin{equation*}
\begin{aligned}
\begin{tikzpicture}[>=Stealth, scale=0.9, every node/.style={scale=0.9}]

\node (e0) at (0,0.8) {$e_0$};
\node (e2) at (1.6,0.8) {$e_2$};
\node (e4) at (3.2,0.8) {$e_4$};
\node (A)  at (4.6,0.8) {$\cdots$};
\node (e2i) at (6.0,0.8) {$e_{2i}$};
\node (e2ip2) at (7.6,0.8) {$e_{2i+2}$};
\node (C) at (9.0,0.8) {$\cdots$};

\node (e1) at (0,-0.8) {$e_1$};
\node (e3) at (1.6,-0.8) {$e_3$};
\node (e5) at (3.2,-0.8) {$e_5$};
\node (B)  at (4.6,-0.8) {$\cdots$};
\node (e2ip1) at (6.0,-0.8) {$e_{2i+1}$};
\node (e2ip3) at (7.6,-0.8) {$e_{2i+3}$};
\node (D) at (9.0,-0.8) {$\cdots$};

\draw[->] (e1) -- node[right] {$x$} (e0);
\draw[->] (e3) -- node[right] {$x$} (e2);
\draw[->] (e5) -- node[right] {$x$} (e4);
\draw[->] (e2ip1) -- node[right] {$x$} (e2i);
\draw[->] (e2ip3) -- node[right] {$x$} (e2ip2);

\draw[->] (e2) -- node[below] {\scriptsize $-1$} node[above] {\scriptsize $s-\alpha$} (e0);
\draw[->] (e4) -- node[below] {\scriptsize $-2$} node[above] {\scriptsize $s-\alpha$} (e2);
\draw[->] (A) -- (e4);
\draw[->] (e2i) -- node[below] {\scriptsize $-i$} node[above] {\scriptsize $s-\alpha$} (A);
\draw[->] (e2ip2) -- node[below] {\scriptsize $-i-1$} node[above] {\scriptsize $s-\alpha$} (e2i);
\draw[->] (C) -- (e2ip2);

\draw[->] (e3) -- node[above] {\scriptsize $-1$} node[below] {\scriptsize $s-\alpha$} (e1);
\draw[->] (e5) -- node[above] {\scriptsize $-2$} node[below] {\scriptsize $s-\alpha$} (e3);
\draw[->] (B) -- (e5);
\draw[->] (e2ip1) -- node[above] {\scriptsize $-i$} node[below] {\scriptsize $s-\alpha$} (B);
\draw[->] (e2ip3) -- node[above] {\scriptsize $-i-1$} node[below] {\scriptsize $s-\alpha$} (e2ip1);
\draw[->] (D) -- (e2ip3);

\end{tikzpicture}
\end{aligned}
\end{equation*}

\begin{proposition}
Assume ${\rm char}\,\mK=0$.	For any $\alpha \in \mK^*$, the $\CJ$-module $M_{\alpha}$ is simple and faithful. 
\end{proposition}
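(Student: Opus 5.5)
Simplicity will come from showing that every nonzero submodule contains $e_0$ and that $e_0$ generates $M_\alpha$. Faithfulness then follows from Corollary~\ref{1Feb26}(2) together with the fact that $M_\alpha$ is infinite-dimensional. Alternatively, $s$ acts bijectively, since it is $\alpha$ plus a locally nilpotent operator and $\alpha\neq 0$.

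First I show that $e_0$ generates. From the formula for $y$: for even $i$, $ye_i=\alpha e_{i+1}-\frac{i}{2}e_{i-1}$, and for odd $i$, $ye_i=\alpha e_{i+1}-\frac{i-1}{2}e_{i-1}$. So $ye_i\equiv \alpha e_{i+1}$ modulo $\operatorname{span}(e_0,\dots,e_{i-1})$. Since $\alpha\neq0$, induction on $i$ gives $e_{i+1}\in \CJ e_0$ whenever $e_0,\dots,e_i\in\CJ e_0$. Hence $\CJ e_0=M_\alpha$.

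Next, let $N\neq 0$ be a submodule and pick $0\neq v=\sum_{j\le n}c_je_j\in N$ with $c_n\neq0$. The operator $t:=s-\alpha$ lowers the index by $2$, with coefficients $-\frac{i}{2}$ on $e_i$ for even $i$ and $-\frac{i-1}{2}$ for odd $i$. These coefficients are nonzero for $i\ge2$, because $\operatorname{char}\mK=0$. The operator $x$ sends $e_i$ to $e_{i-1}$ for odd $i$ and kills $e_i$ for even $i$.
\begin{itemize}
\item If $n$ is even, apply $t^{n/2}$. It kills every $e_j$ with $j<n$, since $t$ lowers the index by $2$ and kills $e_0,e_1$. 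It sends $e_n$ to a nonzero multiple of $e_0$. Thus $e_0\in N$.
\item If $n$ is odd, first apply $x$. This gives $c_ne_{n-1}+(\text{lower even terms})\neq 0$, with top index $n-1$ even, and we reduce to the previous case.
\end{itemize}
Hence $e_0\in N$, so $N=M_\alpha$ and $M_\alpha$ is simple.

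For faithfulness, $M_\alpha$ is simple and infinite-dimensional, so it is faithful by Corollary~\ref{1Feb26}(2).

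The only real care needed is bookkeeping: the top index must genuinely survive each lowering operator with a nonzero coefficient. That is exactly where $\operatorname{char}\mK=0$ is used, via the coefficients $-i/2$, and where $\alpha\neq0$ is used, in the generation step.
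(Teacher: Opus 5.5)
Your proof is correct and follows the paper's argument essentially step for step. The operator $y$ shows that $e_0$ generates (using $\alpha\neq 0$), $x$ and powers of $s-\alpha$ push any nonzero vector of a submodule to a nonzero multiple of $e_0$ (using $\operatorname{char}\mK=0$), and faithfulness is read off from Corollary~\ref{1Feb26}(2). The only cosmetic difference is bookkeeping: you track the top index of $v$, applying $x$ only when that index is odd and letting $(s-\alpha)^{n/2}$ kill the lower odd terms, whereas the paper first moves into the even part $M[0]$ and then applies $(s-\alpha)^m$.
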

\begin{proof}
	Since $\alpha\neq 0$,  the action of $y$ shows that $M_{\alpha}$ is cyclic with generator $e_0$.
	Write $M_{\alpha}=M[0]\oplus M[1]$, where $M[0]=\bigoplus_{i\in \N} \mK e_{2i}$ and $M[1]=\bigoplus_{i\in \N}\mK e_{2i+1}$. Then $xM[0]=0$ and $xM[1]=M[0]$. 
	
	 Let $V$ be a nonzero submodule of $M_{\alpha}$.  We first show that $V \cap M[0] \neq 0$. Take a nonzero element $v=v_0 +v_1 \in V$ with $v_0\in M[0]$ and $v_1 \in M[1]$. If $v_1=0$ there is nothing to prove. Otherwise, $xv=xv_1 \in V\cap M[0]$ is nonzero. Now let $0 \neq v \in V \cap M[0]$. Write $v=\sum_{i=0}^m \mu_i e_{2i}$ where $\mu_i \in \mK$ with $\mu_m \neq 0$. If $m>0$ then $(s-\alpha)v=-\sum_{i=0}^m i \mu_i e_{2i-2}$ is a nonzero element in $V \cap M[0]$. Iterating this process gives $(s-\alpha)^m v \in \mK^* e_0$, and hence $e_0 \in V$.  Since $M_{\alpha}$ is generated by $e_0$, it follows that $V=M_{\alpha}$. Thus $M_{\alpha}$ is simple. 	 
	 Finally, Corollary~\ref{1Feb26}(2) shows that every infinite-dimensional simple $\CJ$-module is faithful. Hence $\ann_{\CJ}(M_{\alpha})=\langle 0\rangle$.
\end{proof}

\section{The super Jordan plane in positive characteristic}  \label{SupJordan-p} 

In this section we study the super Jordan plane $\CJ$ over a field of positive characteristic. Our main results include descriptions of the center, the PI degree, and the classical ring of quotients of $\CJ$, as well as a determination of its prime spectrum, Azumaya locus, and simple modules.

\subsection{Center, PI degree and ring of quotients}
Recall that in the localization $\CJ_s$ we write $y'=ys^{-1}$ and $e=y'x$, where $e$ is an idempotent. The following lemma records a $p$th-power identity needed for the computation of the center of $\CJ$.
\begin{lemma} \label{22Jan26} 
	Assume ${\rm char}\,\mK=p >2$. Then $y^{\prime \,2p}=(ys^{-1})^{2p}=y^{2p} s^{-2p}$. 
\end{lemma}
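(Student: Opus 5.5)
The plan is to compute inside the matrix model of Lemma~\ref{12Jan26}, where the $p$th-power identity \eqref{normp} for the Weyl algebra can be applied directly. Write $D:=y^{\prime\,2}\in\CA$. By the proof of Lemma~\ref{12Jan26}, $[D,s]=1$, so $D$ and $s$ play the roles of $\partial$ and $x$ in $A_1$. In characteristic $p$ both $D^p$ and $s^p$ are central in $\CA$.

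Under $\varphi$ we have $y'\mapsto\begin{bmatrix}0&1\\ D&0\end{bmatrix}$, so $y^{\prime\,2}\mapsto D\cdot I$. Hence
\[
\varphi\bigl(y^{\prime\,2p}\bigr)=D^p I,\qquad \varphi\bigl(y^{\prime\,2p}s^{2p}\bigr)=D^ps^{2p} I .
\]
For $y=y's$ we get $\varphi(y)=\begin{bmatrix}0&s\\ Ds&0\end{bmatrix}$. Therefore
\[
\varphi(y^2)=\begin{bmatrix}sDs&0\\0&Ds^2\end{bmatrix},\qquad
\varphi(y^{2p})=\begin{bmatrix}(sDs)^p&0\\0&(Ds^2)^p\end{bmatrix}.
\]
So the claimed identity $y^{2p}=y^{\prime\,2p}s^{2p}$ reduces to showing $(sDs)^p=D^ps^{2p}=(Ds^2)^p$ in $\CA$.

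For the first entry, set $\theta:=sD$. From $Ds=sD+1$ we get $\theta s=s(\theta+1)$. An induction moving each $s$ to the left then gives
\[
(\theta s)^n=s^n\prod_{k=1}^n(\theta+k).
\]
With $n=p$ we have $\prod_{k=1}^p(\theta+k)=\theta^p-\theta$ in characteristic $p$. By \eqref{normp}, with $x\to s$ and $\partial\to D$, this equals $s^pD^p$. Hence $(sDs)^p=s^{2p}D^p$, which is central.

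For the second entry, note $Ds^2=s^{-1}(sDs)s$. Therefore $(Ds^2)^p=s^{-1}(sDs)^ps$, and this equals $(sDs)^p$ because the latter is central. Thus $\varphi(y^{2p})=\varphi(y^{\prime\,2p}s^{2p})$. Since $\varphi$ is injective, $y^{2p}=y^{\prime\,2p}s^{2p}$. Multiplying on the right by $s^{-2p}$ gives the lemma.

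The only delicate step is the ordering identity $(\theta s)^p=s^p(\theta^p-\theta)$ combined with \eqref{normp}. I would double-check the sign convention $[D,s]=1$ against $\partial x-x\partial=1$ so that \eqref{normp} applies verbatim.
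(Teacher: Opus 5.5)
Your proof is correct, but it reaches the identity by a different route from the paper's.

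The paper works intrinsically in $\CJ_s$:
\begin{itemize}
\item it writes $y^{\prime\,2}=Hs^{-1}$ with $H:=y^2s^{-1}+e$;
\item it uses $s^{-1}H=(H+1)s^{-1}$ to get $y^{\prime\,2p}=(H^p-H)s^{-p}$;
\item it uses $[y^2s^{-1},e]=0$ and $e^2=e$ to reduce $H^p-H$ to $(y^2s^{-1})^p-y^2s^{-1}$;
\item it applies \eqref{normp} in the Weyl algebra generated by $y^2$ and $s^{-1}$, where $[s^{-1},y^2]=1$.
\end{itemize}

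You instead transport the problem through $\varphi$ of Lemma~\ref{12Jan26}. There $y^{\prime\,2}$ becomes the scalar $D\cdot I$, so $y^{\prime\,2p}s^{2p}$ is visibly the scalar $s^{2p}D^p$. All the work then lies in the identity $(sDs)^p=s^{2p}D^p$ in $\CA$, which you obtain from the same shift/Pochhammer identity and \eqref{normp} applied to the pair $(s,D)$. Your sign worry is unfounded: $Ds-sD=1$ matches $\partial x-x\partial=1$ exactly, so \eqref{normp} gives $(sD)^p=s^pD^p+sD$ verbatim.

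The two proofs share the same ingredients. In the matrix picture the idempotent $e$ is $E_{11}$, $\varphi(y^2s^{-1})=\mathrm{diag}(sD,Ds)$, and the paper's $H$ is the scalar $(Ds)I$. So the paper's bookkeeping with $e$ is replaced by handling the two diagonal entries separately. You reduce the second entry to the first by conjugating with $s$ and using centrality of $s^{2p}D^p$.

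Your version relies on the injectivity of $\varphi$, i.e.\ on the full identification $\CJ_s\cong M_2(\CA)$ with $R=\CA$. The paper's computation uses only relations holding in $\CJ_s$. Since Lemma~\ref{12Jan26} precedes this lemma, your dependence on it is legitimate and not circular.
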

\begin{proof}
	First observe that $y^{\prime\, 2}=(ys^{-1})^2=(y^2s^{-1}+ys^{-1}x)s^{-1}=(y^2s^{-1}+e)s^{-1}$.  Set $H:=y^2s^{-1}+e$. Then a direct computation shows that  $s^{-1}H=(H+1)s^{-1}$. It follows that
	\begin{equation*}
	y^{\prime \,2p}=(Hs^{-1})^p=H(H+1)\cdots (H+p-1)s^{-p}. 
	\end{equation*}
	Since ${\rm char}\,\mK=p$, we have the polynomial identity $\prod_{i=0}^{p-1}(H+i)=H^p-H$, and therefore,
	\begin{equation*}
	y^{\prime \,2p}=(H^p-H)s^{-p}. 
	\end{equation*}
	Next note that $[y^2s^{-1}, e]=0$ and that $e$ is idempotent. Therefore,
	\begin{equation*}
    H^p=(y^2s^{-1}+e)^p=(y^2s^{-1})^p+e, 
	\end{equation*}
	  which gives
	\begin{equation*}
	H^p-H=(y^2 s^{-1})^p-y^2s^{-1}. 
	\end{equation*}
	Since $[s^{-1}, y^2]=1$, the subalgebra generated by $y^2$ and $s^{-1}$ is isomorphic to the Weyl algebra $A_1(\mK)$. Applying \eqref{normp} gives $(y^2s^{-1})^p=y^{2p}s^{-p}+y^2s^{-1}$. Thus $H^p-H=y^{2p}s^{-p}$, and substituting back yields $y^{\prime \,2p}=y^{2p}s^{-2p}$, as claimed. 	
\end{proof}

\begin{remark}
	Assume ${\rm char}\,\mK=p>2$. Then in the algebras $\CJ_s$ and $\CJ$ we have
	\begin{equation*}
	\prod_{i=0}^{2p-1}(y+ix)=y^{2p}. 
	\end{equation*}
	Indeed, using the relation $s^{-1}y=(y+x)s^{-1}$, we can write $y^{\prime\, 2p}=(ys^{-1})^{2p}=\prod_{i=0}^{2p-1}(y+ix) \cdot s^{-2p}$. On the other hand, Lemma~\ref{22Jan26} gives $y^{\prime \,2p}=y^{2p} s^{-2p}$. Comparing the two expressions and cancelling $s^{-2p}$ yields the claimed identity.
\end{remark}

Recall that the \emph{PI degree} of a prime PI ring $R$, denoted $\operatorname{PI-deg} R$, is defined as the integer $d$ such that $d^2$ is the dimension of the central simple quotient ring $Q(R)$ over its center $Z(Q(R))$. 
If $R$ is moreover an affine $\mK$-algebra with $\mK$ algebraically closed, then $\operatorname{PI-deg} R$ coincides with the maximal dimension over $\mK$ of the simple $R$-modules. A prime ideal $\gp$ of a prime PI ring $R$ is \emph{regular} if $\operatorname{PI-deg} R/\gp=\operatorname{PI-deg} R$.

\begin{proposition} \label{23Jan26}  
	Assume ${\rm char}\,\mK=p >2$. Then the following hold:
	\begin{enumerate}
		\item The centers of $\CJ_s$ and $\CJ$ are given, respectively, by
		\begin{equation*}
		Z(\CJ_s)=\mK[s^{\pm p}, y^{2p}], \qquad Z(\CJ)=\mK[s^p, y^{2p}].
		\end{equation*}
		\item $\CJ_s$ is an Azumaya algebra of rank $(2p)^2$ over its center. 
		\item $\CJ_s$ is a prime PI ring whose prime ideals are all regular, and $\operatorname{PI-deg}\,\CJ_s=2p$. 
 		\item There is a bijective between the ideals of $\CJ_s$ and the ideals of $Z(\CJ_s)$ given by $I \mapsto I \cap Z(\CJ_s)$ with inverse $\ga \mapsto \ga \CJ_s$. Moreover, this correspondence preserves primeness. 
		\item Every simple $\CJ_s$-module has dimension $2p$. 
		\item $\CJ$ is a prime PI algebra, with $\operatorname{PI-deg}\,\CJ=2p$. 
		\item Let $Z:=Z(\CJ)=\mK[s^p, y^{2p}]$, and $K:=\Frac(Z)=\mK(s^p, y^{2p})$. Then 
		\begin{equation*}
		Q(\CJ) \cong \CJ \otimes_ZK \cong M_2(\CA)\otimes_Z K. 
		\end{equation*}
		is a central simple $(2p)^2$-dimensional algebra over the field $K$. 
	\end{enumerate}
\end{proposition}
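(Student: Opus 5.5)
Everything will be transported through the isomorphism $\varphi:\CJ_s\cong M_2(\CA)$ of Lemma~\ref{12Jan26}. Here $\CA$ is the localization $A_1[s^{-1}]$ of the Weyl algebra generated by $y^{\prime\,2}$ and $s$ with $[y^{\prime\,2},s]=1$.

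The first step is the center of $\CA$. In characteristic $p$ the Weyl algebra $A_1=\mK\langle y^{\prime\,2},s\rangle$ is Azumaya of rank $p^2$ over $\mK[s^p,y^{\prime\,2p}]$. Localizing at the central element $s^p$ (inverting $s$ is the same as inverting $s^p$) keeps the Azumaya property. Hence $\CA$ is Azumaya of rank $p^2$ over $Z(\CA)=\mK[s^{\pm p},y^{\prime\,2p}]$. By Lemma~\ref{22Jan26}, $y^{\prime\,2p}=y^{2p}s^{-2p}$, so $Z(\CA)=\mK[s^{\pm p},y^{2p}]$.

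For (1) and (2): $Z(M_2(\CA))=Z(\CA)$ acting as scalars, and $\varphi(s^{\pm p})$ is scalar. The element $y^{2p}=y^{\prime\,2p}s^{2p}$ maps to the scalar matrix $y^{\prime\,2p}s^{2p}$, since $\varphi(y')^2=\mathrm{diag}(y^{\prime\,2},y^{\prime\,2})$. This gives $Z(\CJ_s)=\mK[s^{\pm p},y^{2p}]$. Then $Z(\CJ)=\CJ\cap Z(\CJ_s)$. Using the PBW basis $x^is^jy^k$ of $\CJ$, an element $\sum c_{ab}s^{pa}y^{2pb}$ with $a\in\Z$ lies in $\CJ$ only if all $a\ge0$. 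Here I need that the $s^{pa}y^{2pb}$ are linearly independent with leading PBW monomials $s^{pa}y^{2pb}$. I expect this to be checkable from the normal form in $\CJ_s$, and it is the main technical point. Since $M_2$ of an Azumaya algebra of rank $p^2$ is Azumaya of rank $4p^2$, this gives (2).

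Parts (3)--(5) are standard consequences of (2). An Azumaya algebra is a finite module over its center, hence PI. Its ideals correspond bijectively to ideals of the center via $I\mapsto I\cap Z$ and $\ga\mapsto\ga\CJ_s$, preserving primeness; this is (4). Since $Z(\CJ_s)$ is a domain, $\CJ_s$ is prime. Every factor $\CJ_s/\gm\CJ_s$ by a maximal ideal of the center is $M_{2p}(\mK)$ when $\mK$ is algebraically closed, so every simple module has dimension $2p$; this is (5). For general $\mK$, I would state (5) over $\bar\mK$ or note that PI-degree is stable under base change. All prime ideals $\gp\CJ_s$ give factors that are Azumaya of the same rank over $Z/\gp$, hence are regular of PI degree $2p$; this is (3).

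For (6): $\CJ\subseteq\CJ_s$, and $\CJ_s$ is prime. A nonzero ideal $I$ of $\CJ$ generates $\CJ_sI\CJ_s=I\CJ_s$, using normality of $s$. So $IJ=0$ in $\CJ$ forces $I=0$ or $J=0$. Subrings of PI rings are PI, and the PI degree is unchanged because $\CJ$ and $\CJ_s$ share the quotient ring. For (7): $Q(\CJ)=Q(\CJ_s)$. Since $\CJ_s$ is finite over its center, by Posner's theorem $Q(\CJ_s)=\CJ_s\otimes_{Z(\CJ_s)}\Frac Z(\CJ_s)$, and $\Frac Z(\CJ_s)=K$. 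Moreover $\CJ_s\otimes_{Z(\CJ_s)}K=\CJ\otimes_ZK$, because $s^p\in Z$ is already inverted in $K$. The result is central simple of dimension $(2p)^2$ by (2).
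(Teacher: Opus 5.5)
Your proposal is correct and follows essentially the same route as the paper: you transport everything through $\CJ_s\cong M_2(\CA)$, use Lemma~\ref{22Jan26} for the center, standard Azumaya facts for (2)--(5), primeness of $\CJ_s$ together with the shared quotient ring for (6), and Posner's theorem for (7). The local variations are harmless (deducing (2) from $\CA$ being a central localization of the Azumaya algebra $A_1$ rather than the paper's fibre check over $\overline{\mK}$, and proving regularity directly rather than via Artin--Procesi); the step you flag for $Z(\CJ)$ is immediate, since after multiplying by a suitable $s^{pN}$ each $s^{pa}y^{2pb}$ is a PBW monomial, and because $sx=xs$ the set $s^{pN}\CJ$ is exactly the span of the monomials $x^is^jy^k$ with $j\ge pN$. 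Your caveat that (5) needs $\mK$ algebraically closed is a fair correction, and in (6) the two-sidedness of $I\CJ_s$ comes from the centrality of $s^p$ (so $s^{-1}i=s^{p-1}is^{-p}$), not from normality of $s$ alone.
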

\begin{proof}
	(1) Since $Z(\CA)=\mK[s^{\pm p}, y^{\prime \,2p}]$, Lemma \ref{12Jan26} implies  $Z(\CJ_s)=\mK[s^{\pm p}, y^{\prime \, 2p}]$. By Lemma \ref{22Jan26}, $y^{\prime\, 2p}=y^{2p}s^{-2p}$, so $Z(\CJ_s)=\mK[s^{\pm p}, y^{2p}]$. Intersecting with $\CJ$ gives $Z(\CJ)=\CJ \cap Z(\CJ_s)=\mK[s^p, y^{2p}].$
	
	(2) To check that $\CJ_s$ is an Azumaya algebra, it suffices to check this holds after an extension of scalars to $\overline{\mK}$, so without loss of generality, we can assume that $\mK$ is algebraically closed. The maximal ideals of $Z(\CJ_s)$ are $\gm_{\alpha,\beta}=\langle s^p-\alpha, \,y^{2p}-\beta \rangle$ where $\alpha \in \mK^*$ and $\beta \in \mK$. Using $\CJ_s \cong M_2(\CA)$, we obtain
	\begin{equation} \label{CJsm}  
	\CJ_s/\gm_{\alpha,\beta}\CJ_s \cong M_2\big( \CA/\gm_{\alpha,\beta}\CA\big) \cong M_2(M_p(\mK)) \cong M_{2p}(\mK). 
	\end{equation}
	Thus every fibre over a maximal ideal of the center is a full matrix algebra, and therefore $\CJ_s$ is an Azumaya algebra of rank $(2p)^2$. 
	
	(3) Since $\CJ_s \cong M_2(\CA)$ is prime and Azumaya, the statement follows from the Artin--Procesi theorem (see \cite[Theorem 13.7.14]{MR}). 
	
	(4) This is a standard property of Azumaya algebras; see \cite[Proposition 13.7.9]{MR}. 
	
	(5) Since $\CJ_s$ is Azumaya of rank $(2p)^2$,  every simple $\CJ_s$-module has dimension $2p$. 
	
	(6) The algebra $\CJ$ is a finitely generated module over its center, hence a PI ring by \cite[Corollary 13.1.13]{MR}. If fact, from the PBW basis, $\CJ$ is a free $Z(\CJ)$-module of rank $4p^2$. Since $\CJ_s$ is prime and $s$ is a regular normal element, the zero ideal of $\CJ$ is prime. Moreover, $\CJ$ and $\CJ_s$ have the same classical  quotient ring,  hence $\operatorname{PI-deg}\CJ=\operatorname{PI-deg}\CJ_s=2p$. 
	
	(7) Since $\CJ$ is a prime PI ring, by Posner's Theorem (\cite[Theorem 13.6.5]{MR}), the classical quotient ring $Q(\CJ)$ is obtained by inverting the nonzero central elements of $\CJ$, and $Q(\CJ)$ is a central simple algebra with center $K$. Since $\operatorname{PI-deg}\CJ=2p$, we have $\dim_K Q(\CJ) =(2p)^2$. 
\end{proof}

\subsection{Prime ideals and the Azumaya locus}

The following theorem gives an explicit description of the prime, primitive, and maximal ideals of the super Jordan plane $\CJ$ in positive characteristic.
\begin{theorem} \label{28Jan26} 
	Assume that $\mK$ is algebraically closed of characteristic $p>2$.
	\begin{enumerate}
		\item The prime spectrum of $\CJ$ is given by
	\begin{equation*}
	\Spec(\CJ)= \{ \langle s, \,x \rangle\} \,\, \cup \,\,\{\langle s,\,x, \,y-\alpha \rangle \mid \alpha \in \mK   \} \,\, \cup \,\, \{ \ga\CJ_s \cap \CJ \mid \ga \in \Spec(\mK[s^{\pm p}, y^{2p}]) \}. 
	\end{equation*}
	
    \item The primitive ideals of $\CJ$ coincide with its maximal ideals and are given by
	\begin{equation*}
	\Prim(\CJ)=\Max(\CJ)= \{\langle s,\,x, \,y-\alpha \rangle \mid \alpha \in \mK   \} \,\cup \, \{ \langle s^p-\alpha, \,y^{2p}-\beta \rangle \mid \alpha \in \mK^*,\, \beta\in \mK\}. 
	\end{equation*}
	The corresponding primitive factor algebras are as follows:
	\begin{enumerate}
		\item For each $\alpha \in \mK$, $\CJ/\langle s, \, x, \, y-\alpha \rangle \cong \mK$. 
		\item For each $\alpha\in \mK^*$ and $\beta\in \mK$, $\CJ/\langle s^p-\alpha,\,y^{2p}-\beta \rangle \cong M_{2p}(\mK)$. 
	\end{enumerate}
		\end{enumerate}
\end{theorem}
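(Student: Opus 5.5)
Part (1) follows the pattern of Theorem~\ref{A24Jan26}. We split $\Spec(\CJ)=\Spec(\CJ,s)\cup\Spec_s(\CJ)$ according to whether a prime contains the normal element $s$.

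For primes containing $s$, we pass to $\Lambda=\CJ/s\CJ$. Proposition~\ref{24Jan26} holds in arbitrary characteristic, so it gives the ideals $\langle s,x\rangle$ and $\langle s,x,y-\alpha\rangle$.

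For primes not containing $s$, we use that $s$ is a regular normal element of the Noetherian ring $\CJ$. Contraction $P\mapsto P\cap\CJ$ is then a bijection from $\Spec(\CJ_s)$ onto $\Spec_s(\CJ)$, with inverse $Q\mapsto Q\CJ_s$. By Proposition~\ref{23Jan26}(4), the primes of $\CJ_s$ are exactly the ideals $\ga\CJ_s$ with $\ga\in\Spec(Z(\CJ_s))=\Spec(\mK[s^{\pm p},y^{2p}])$. Combining the two cases gives the list in (1). Note that the zero ideal appears as the case $\ga=0$.

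For (2), maximal ideals are primitive, so it suffices to show every primitive ideal is maximal and identify them.
\begin{itemize}
\item $\CJ$ is an affine PI algebra, finite over its center $Z=\mK[s^p,y^{2p}]$ by Proposition~\ref{23Jan26}. Hence every simple module is finite-dimensional (Kaplansky), and so primitive ideals are maximal.
\item Among primes containing $s$, only the $\langle s,x,y-\alpha\rangle$ are maximal, since $\CJ/\langle s,x\rangle\cong\mK[y]$. The factor $\CJ/\langle s,x,y-\alpha\rangle\cong\mK$, which gives (a).
\item Among primes not containing $s$, the maximal ones come from maximal ideals of $\CJ_s$, namely $\gm_{\alpha,\beta}\CJ_s$ with $\gm_{\alpha,\beta}=\langle s^p-\alpha,y^{2p}-\beta\rangle$ and $\alpha\neq0$.
\end{itemize}
We must also check that the contraction $\gm_{\alpha,\beta}\CJ_s\cap\CJ$ is maximal in $\CJ$. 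It is not contained in any $\langle s,x,y-\gamma\rangle$, because it contains the unit $s^p-\alpha$ modulo $s$. Any prime properly containing it and not containing $s$ would contract from a prime of $\CJ_s$ properly containing a maximal ideal, which is impossible.

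The main obstacle is to show that this contraction equals the ideal $I:=\langle s^p-\alpha,y^{2p}-\beta\rangle$ of $\CJ$ generated by two central elements, and that $\CJ/I\cong M_{2p}(\mK)$. My plan is as follows.
\begin{enumerate}
\item Since $\CJ$ is free over $Z$ of rank $4p^2$ (the PBW basis $x^is^jy^k$ with $i<2$, $j<p$, $k<2p$), we get $\dim_\mK\CJ/I=\dim_\mK\CJ/\gm\CJ=4p^2$ for $\gm=\langle s^p-\alpha,y^{2p}-\beta\rangle\subset Z$.
\item The element $s$ is invertible modulo $I$, because $s\cdot s^{p-1}\alpha^{-1}\equiv1$. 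Hence $\CJ/I\cong\CJ_s/I\CJ_s=\CJ_s/\gm_{\alpha,\beta}\CJ_s$, up to rewriting $y^{2p}$ versus $y'^{2p}$ through Lemma~\ref{22Jan26}.
\item By \eqref{CJsm}, $\CJ_s/\gm_{\alpha,\beta}\CJ_s\cong M_{2p}(\mK)$. This gives (b), and in particular $I$ is maximal and equals the contraction.
\end{enumerate}
For step 2, concretely: the natural map $\CJ/I\to\CJ_s/I\CJ_s$ is an isomorphism because inverting an element that is already a unit changes nothing. Equivalently, one can compare dimensions, since both sides have dimension $4p^2$ and the map is surjective, $s^{-1}$ lying in the image.
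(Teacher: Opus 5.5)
Your proposal is correct and follows the paper's proof essentially step for step. You split $\Spec(\CJ)$ according to whether $s\in P$, use Proposition~\ref{24Jan26} for the primes containing $s$, and use the localization bijection together with Proposition~\ref{23Jan26}(4) for the rest. For (2) you observe that $s$ is a unit modulo $\gm_{\alpha,\beta}\CJ$, so $\CJ/\gm_{\alpha,\beta}\CJ\cong\CJ_s/\gm_{\alpha,\beta}\CJ_s\cong M_{2p}(\mK)$, which gives both maximality and $\gm_{\alpha,\beta}\CJ_s\cap\CJ=\gm_{\alpha,\beta}\CJ$. Your dimension count via the free $Z$-basis $x^is^jy^k$ ($i<2$, $j<p$, $k<2p$) and your separate maximality check of the contraction are valid extras that the paper does not need.
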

\begin{proof}
	(1) Let $\Spec(\CJ, s)$ (resp. $\Spec_s(\CJ)$) denote the set of prime ideals of $\CJ$ containing (resp. not containing) $s$. Then $\Spec(\CJ)=\Spec(\CJ, s) \cup \Spec_s(\CJ)$. 	
	Prime ideals of $\CJ$ containing $s$ correspond bijectively to prime ideals of the factor algebra $\Lambda=\CJ/s\CJ$. By Proposition \ref{24Jan26}, we have
	\begin{equation*}
	\Spec(\CJ, s)= \{ \langle s, \,x \rangle\} \,\, \cup \,\,\{\langle s,\,x, \,y-\alpha \rangle \mid \alpha \in \mK   \}. 
	\end{equation*}
	Since $s$ is normal in $\CJ$, prime ideals of $\CJ$ not containing $s$ are in bijection with prime ideals of the localization $\CJ_s$. By Proposition \ref{23Jan26}(4), the prime ideals of $\CJ_s$ are precisely those extended from its center $Z(\CJ_s)=\mK[s^{\pm p}, y^{2p}]$, that is, $\Spec(\CJ_s)=\{ \ga \CJ_s \mid \ga \in \Spec(\mK[s^{\pm p}, y^{2p}]) \}$. Consequently, $\Spec_s(\CJ)=\{ \ga \CJ_s \cap \CJ \mid \ga \in \Spec(\mK[s^{\pm p}, y^{2p}]) \}$. Combining these two cases gives the stated description of $\Spec(\CJ)$. 
	
	(2) Since $\CJ$ is a PI algebra, every primitive ideal is maximal. It follows from statement (1) that 
	\begin{equation*}
	\Max(\CJ)= \{\langle s,\,x, \,y-\alpha \rangle \mid \alpha \in \mK   \} \,\, \cup \,\, \{ \gm\CJ_s \cap \CJ \mid \gm \in \Max(\mK[s^{\pm p}, y^{2p}]) \}. 
	\end{equation*}
	Since $\mK$ is algebraically closed, the maximal ideals of $\mK[s^{\pm p}, y^{2p}]$ are precisely $\gm_{\alpha,\beta}=\langle s^p-\alpha, \,y^{2p}-\beta \rangle$ where $\alpha \in \mK^*$ and $\beta \in \mK$. We claim that $\gm_{\alpha,\beta}\CJ_s \cap \CJ=\gm_{\alpha,\beta}\CJ$. Indeed, since $\alpha \in \mK^*$, the image of $s$ is invertible in $\CJ/\gm_{\alpha,\beta}\CJ$, and hence, 
	\begin{equation*}
	\CJ/\gm_{\alpha,\beta}\CJ \cong \CJ_s/\gm_{\alpha,\beta}\CJ_s. 
	\end{equation*}
	By \eqref{CJsm}, the latter algebra is isomorphic to $M_{2p}(\mK)$,  showing that $\gm_{\alpha,\beta}\CJ$ is maximal in $\CJ$ and proving the claim. The description of the primitive factor algebras follows immediately. 
\end{proof}

\begin{remark}
	Let $R$ be an algebra finite over its center $Z=Z(R)$. The \emph{Azumaya locus} of $R$ over $Z$ is the dense open subset of $\Max(Z)$ defined by
	\begin{equation*}
    \operatorname{Az}(R):=\{ \gm \in \Max(Z) \mid R_{\gm} \text{ is Azumaya over } Z_{\gm} \},
	\end{equation*} 
	where $R_{\gm}$ and $Z_{\gm}$ denote the localizations of $R$ and $Z$ at $\gm$, respectively. Equivalently, a maximal ideal $\gm \in \Max(Z)$ lies in $\operatorname{Az}(R)$ if and only if $R/\gm R$ is a central simple algebra over the field $Z/\gm$. On the Azumaya locus, the PI degree of $R/\gm R$ achieves the PI degree of $R$. Thus, $\gm \in \operatorname{Az}(R)$ if and only if $\gm R$ is the annihilator of a simple $R$-module $M$ with $\dim M=\operatorname{PI-deg}R$. 
	
	Assume that $\mK$ is algebraically closed of characteristic $p>2$. Then the Azumaya locus of $\CJ$ is 
	\begin{equation*}
	\operatorname{Az}(\CJ)= \{\gm_{\alpha,\beta} \mid \alpha \in \mK^*,\, \beta\in \mK \}, \quad {\rm where}\,\, \gm_{\alpha,\beta}=\langle s^p-\alpha, \,y^{2p}-\beta \rangle \in \Max(\mK[s^p, y^{2p}]). 
	\end{equation*}
	For each $\gm_{\alpha,\beta} \in \operatorname{Az}(\CJ)$, the corresponding factor algebra satisfies $\CJ/\gm_{\alpha,\beta}\CJ \cong M_{2p}(\mK)$, and is therefore a central simple algebra over $\mK$ of PI degree $2p$. The complement of the Azumaya locus is the closed subset $V(s):= \{ \langle s^p, y^{2p}-\beta \rangle \mid \beta\in \mK \}$. For these maximal ideals, the factor algebras $\CJ/\gm \CJ$ have PI degree one, and $\CJ$ is not Azumaya at $\gm$.  
\end{remark}

\subsection{Classification of simple $\CJ$-modules}

Suppose that ${\rm char}\,\mK=p>2$. Then $\CJ$ is a PI algebra, and hence every simple $\CJ$-module is finite-dimensional. Moreover, by the description of the primitive factors of $\CJ$ in Theorem \ref{28Jan26}(2), every simple $\CJ$-module has dimension either $1$ or $2p$. The following theorem gives an explicit construction and classification of all simple $\CJ$-modules. 
\begin{theorem} \label{29Jan26}  
	Let $\mK$ be an algebraically closed field of characteristic $p>2$.
	\begin{enumerate}
		\item For any $\alpha \in \mK$, the $\CJ$-module $S_{\alpha}:=\CJ/\langle s, \, x, \, y-\alpha \rangle$ is simple. 
		\item For $\alpha \in \mK^*$ and $\beta \in \mK$, the $\CJ$-module
		\begin{equation*}
		V_{\alpha,\beta}:=\CJ/\CJ(x, \, s-\alpha,\,y^{2p}-\beta)
		\end{equation*}
		is simple of dimension $2p$. 
		\item The set of isomorphism classes of simple $\CJ$-modules is 
		\begin{equation*}
		\widehat{\CJ}=\{ [S_{\alpha}] \mid \alpha \in \mK \} \,\cup \, \{ [V_{\alpha, \beta}] \mid \alpha \in \mK^*, \beta \in \mK \}. 
		\end{equation*}
	\end{enumerate}
\end{theorem}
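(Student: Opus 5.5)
Part (1) is immediate: $\langle s,x,y-\alpha\rangle$ is a maximal ideal with $\CJ/\langle s,x,y-\alpha\rangle\cong\mK$, so $S_\alpha$ is one-dimensional and hence simple.

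For (2), I first bound $\dim V_{\alpha,\beta}$ from above using the PBW basis. Write $L=\CJ(x,\,s-\alpha,\,y^{2p}-\beta)$ and let $v$ be the image of $1$, so $xv=0$, $sv=\alpha v$ and $y^{2p}v=\beta v$. Every element of $\CJ$ is a combination of monomials $y^k s^j x^i$; this reversed PBW order is legitimate since $\CJ$ is also spanned by such monomials by symmetry of the relations. Acting on $v$, the factor $x^i$ kills $v$ when $i=1$, and $s^j$ acts by the scalar $\alpha^j$. Hence $V_{\alpha,\beta}$ is spanned by $y^kv$. Since $y^{2p}$ is central (Proposition \ref{23Jan26}(1)), we get $y^{2p}y^kv=y^ky^{2p}v=\beta y^kv$. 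Therefore $V_{\alpha,\beta}$ is spanned by $v,yv,\dots,y^{2p-1}v$, and $\dim V_{\alpha,\beta}\le 2p$.

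Next I check that $V_{\alpha,\beta}\neq 0$ and identify its annihilator. The element $s^p-\alpha^p$ is central and acts by zero on the cyclic generator $v$, hence on all of $V_{\alpha,\beta}$; the same holds for $y^{2p}-\beta$. So $V_{\alpha,\beta}$ is a module over $\CJ/\gm_{\alpha^p,\beta}\CJ\cong M_{2p}(\mK)$ by Theorem \ref{28Jan26}(2)(b), noting $\alpha^p\in\mK^*$. Every module over $M_{2p}(\mK)$ is a direct sum of copies of the unique $2p$-dimensional simple module. Thus once $V_{\alpha,\beta}\ne 0$, the bound $\dim\le 2p$ forces $V_{\alpha,\beta}$ to be that simple module, of dimension exactly $2p$.

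The main obstacle is showing $V_{\alpha,\beta}\neq0$, that is, $1\notin L$. I would handle it by exhibiting an explicit nonzero module. Take $N$ with basis $e_0,\dots,e_{2p-1}$ and mimic the formulas for $M_\alpha$ in characteristic $0$, reading indices modulo $2p$ and inserting $\beta$ in the wraparound term (as the $\mK[y]$-module $\mK[y]/(y^{2p}-\beta)$, suitably twisted). Alternatively, and more cleanly, I would take the simple $M_{2p}(\mK)$-module $W$ of $\CJ/\gm_{\alpha^p,\beta}\CJ$. In $W$, the element $x$ is nilpotent and commutes with $s$, so the nonzero subspace $\ker x$ is $s$-stable. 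Now $s$ acts semisimply on $W$: since $s^p=\alpha^p$, we have $(s-\alpha)^p=0$, so $s-\alpha$ is nilpotent, and $\ker x\cap\ker(s-\alpha)\neq0$. Any nonzero $w$ in this intersection satisfies $xw=0$, $sw=\alpha w$ and $y^{2p}w=\beta w$. Hence $1\mapsto w$ gives a nonzero map $V_{\alpha,\beta}\to W$, so $V_{\alpha,\beta}\neq0$. This second route avoids explicit formulas, and I would use it.

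For (3), let $M$ be simple. Its annihilator is primitive, so by Theorem \ref{28Jan26}(2) it is either $\langle s,x,y-\alpha\rangle$, giving $M\cong S_\alpha$, or $\gm_{\alpha',\beta}\CJ$ with $\alpha'\in\mK^*$. In the latter case $M$ is the unique simple module of $M_{2p}(\mK)$, and by the argument above it contains $w$ with $xw=0$, $sw=\alpha w$ for some $p$-th root $\alpha$ of $\alpha'$, and $y^{2p}w=\beta w$. So $M$ is a quotient of, hence isomorphic to, $V_{\alpha,\beta}$. Finally, $V_{\alpha,\beta}$ has annihilator $\gm_{\alpha^p,\beta}$, and since $\alpha\mapsto\alpha^p$ is bijective on the algebraically closed field $\mK$ of characteristic $p$, distinct pairs $(\alpha,\beta)$ give non-isomorphic modules. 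The $S_\alpha$ are pairwise distinct and differ from the $V_{\alpha,\beta}$ by dimension.
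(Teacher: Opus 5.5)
Your proof is correct, and for part (2) it takes a genuinely different route from the paper.

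\textbf{How the paper proves (2).} It works by hand. It reads off from the PBW basis that $\{y^i\xi\}_{0\le i<2p}$ is a basis of $V_{\alpha,\beta}$, and splits $V_{\alpha,\beta}=V_0\oplus V_1$ by parity. It then checks $xV_0=0$ and $xV_1=V_0$ (using $\alpha\neq 0$). Finally it shows that $s-\alpha$ acts on $V_0$ as a strictly degree-lowering triangular operator, so every nonzero submodule contains $\xi$.

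\textbf{How you prove (2).} You need only the spanning half, which gives $\dim V_{\alpha,\beta}\le 2p$. The isomorphism $\CJ/\gm_{\alpha^p,\beta}\CJ\cong M_{2p}(\mK)$ of Theorem~\ref{28Jan26}(2)(b) then does the rest. Semisimplicity of $M_{2p}(\mK)$ upgrades ``nonzero of dimension at most $2p$'' to ``simple of dimension exactly $2p$''. Non-vanishing comes from a common null vector of $x$ and $s-\alpha$ in the simple $M_{2p}(\mK)$-module.

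\textbf{Comparison.} Your route avoids both the triangularity computation and checking the linear independence of the $y^i\xi$. The price is reliance on Lemma~\ref{12Jan26} and the fibre computation \eqref{CJsm}. The paper's argument needs only the PBW basis and the centrality of $y^{2p}$, and it exhibits the module explicitly. In (3) the two proofs are essentially the same: you get the common eigenvector from $(s-\alpha)^p=s^p-\alpha^p=0$ on the primitive factor, the paper from finite-dimensionality and invertibility of $s$. Your annihilator comparison also shows that the $V_{\alpha,\beta}$ are pairwise non-isomorphic, which the paper leaves unstated.

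\textbf{Two small corrections.}

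First, the assertion that $s$ acts semisimply on $W$ is false. If it did, $(s-\alpha)^p=0$ would force $s=\alpha$ on $W$. The relation $ys-sy=xs$ would then give $\alpha x=0$, so $x=0$ and $s=xy+yx=0$ on $W$, a contradiction. You only use that $s-\alpha$ is nilpotent, so simply delete that phrase.

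Second, plain word reversal is not an anti-automorphism of $\CJ$: the reversed cubic relation $xy^2-y^2x-xyx$ equals $-2xs\neq 0$ in $\CJ$. The spanning set $\{y^ks^jx^i\}$ comes instead from the anti-automorphism that fixes $x$ and sends $y\mapsto -y$. It maps $s\mapsto -s$ and $x^is^jy^k\mapsto \pm y^ks^jx^i$.
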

\begin{proof}
	(1) This is immediate since $\dim S_{\alpha}=1$. 
	
	(2)  Let $\xi:=1+\CJ(x, \, s-\alpha,\,y^{2p}-\beta)$ be the canonical generator of $V_{\alpha,\beta}$. By the PBW basis of $\CJ$,
	\begin{equation*}
	V_{\alpha,\beta}=\bigoplus_{i=0}^{2p-1}\mK y^i \xi=V_0 \oplus V_1, 
	\end{equation*} 
	where 
	\begin{equation*}
	V_0:=\bigoplus_{i=0}^{p-1} \mK y^{2i} \xi, \qquad V_1:=\bigoplus_{i=0}^{p-1}\mK y^{2i+1} \xi. 
	\end{equation*}
	
	We first show that $x V_0=0$ and $xV_1 =V_0$. Since $xy^2=(y^2-s)x$, it follows that 
	\begin{equation} \label{xy2i} 
	xy^{2i}=(y^2-s)^ix. 
	\end{equation}
	Thus $xy^{2i}\xi=(y^2-s)^ix \xi=0$, and therefore $x V_0=0$. Next we compute
		\begin{equation*}
	\begin{aligned}
	x y^{2i+1}\xi =xy^{2i}y\xi \stackrel{\eqref{xy2i}}{=}(y^2-s)^i xy \xi 
	= (y^2-s)^i(s-yx)\xi
	=\alpha (y^2-s)^i \xi. 
	\end{aligned}
	\end{equation*}
	Using \eqref{y2s}
	we see that $xy^{2i+1}\xi \in V_0$ with leading term $\alpha y^{2i}\xi$. Since $\alpha\neq 0$, it follows that $x V_1=V_0$.

    Let $W$ be a nonzero submodule of $V_{\alpha,\beta}$. We claim that $W \cap V_0 \neq 0$.  Take a nonzero element $v=v_0+v_1\in W$ with $v_0\in V_0$ and $v_1 \in V_1$. If $v_1=0$ there is nothing to prove. Otherwise, $xv=x(v_0+v_1)=xv_1 \in W \cap V_0$ is nonzero. 
    
    Now take a nonzero $v\in W \cap V_0$ and write $v=\sum_{i=0}^{m}\mu_i y^{2i}\xi$, where $\mu_m\in \mK^*$. Since $sy^{2i}\xi=(y^2-s)^i s\xi=\alpha(y^2-s)^i \xi$, using \eqref{y2s} we obtain 
    \begin{equation*}
    (s-\alpha) y^{2i}\xi=\alpha\big((y^2-s)^i-y^{2i}\big)\xi=\alpha^* y^{2(i-1)}\xi+\sum_{j=0}^{i-2} \l_j y^{2j} \xi
    \end{equation*}
    for some $\alpha^*\in\mK^*$ and $\l_j\in \mK$.
    Thus $s-\alpha$ acts triangularly on $V_0$ with respect to the basis $\{y^{2i}\xi\}$ and strictly lowers degree.    
   Applying $(s-\alpha)^m$ to $v$ produces a nonzero scalar multiple of $\xi$, so $\xi \in W$ and hence $W = V_{\alpha,\beta}$. This proves that $V_{\alpha,\beta}$ is simple of dimension $2p$.

    (3) Let $M$ be a simple $\CJ$-module. Since $s$ is a normal element of $\CJ$, either $sM=0$ or $s$ acts bijectively on $M$. If $sM=0$, then by Theorem \ref{28Jan26}(2) the annihilator of $M$ must be of the form $\langle s, \,x,\, y-\alpha\rangle$ for some $\alpha\in \mK$, and hence $M \cong S_{\alpha}$. Suppose now that $s$ acts bijectively on $M$. Since $xs=sx$ and $\mK$ is algebraically closed, the commuting endomorphisms $x$ and $s$ admit a common eigenvector $\xi\in M$.  As $x^2=0$ and $s$ is invertible on $M$, we must have $x \xi=0$ and $s \xi=\alpha \xi$ for some $\alpha \in \mK^*$. By Schur's Lemma, the central element $y^{2p}$ acts on $M$ as a scalar, say $\beta \in \mK$. Since $M$ is simple, it is generated by $\xi$, and hence there is a surjective $\CJ$-module homomorphism $\pi: V_{\alpha, \beta} \twoheadrightarrow M$. By statement (2), $V_{\alpha,\beta}$ is simple, so $\pi$ is an isomorphism and $M \cong V_{\alpha,\beta}$. 
\end{proof}

\

\

\noindent
\textbf{Data availability statement}  Data sharing is not applicable to this article as no datasets were generated or analysed during the current study.

\

\

\noindent
\textbf{Declarations} The author declares no conflict of interest.

\small{

\end{document}